\pdfoutput=1
\documentclass[11pt,a4paper]{article}
\usepackage[T1]{fontenc}
\usepackage{lmodern,microtype}
\usepackage[margin=27mm]{geometry}
\usepackage{amsmath,amssymb,amsthm,mathtools,booktabs,array}
\usepackage[shortlabels]{enumitem}
\usepackage{xcolor,flafter,needspace}
\usepackage{tikz}
\usetikzlibrary{arrows.meta,positioning,fit,backgrounds}
\definecolor{linkblue}{RGB}{30,62,105}
\usepackage[colorlinks=true,allcolors=linkblue]{hyperref}
\usepackage{url}
\newtheorem{theorem}{Theorem}
\newtheorem{lemma}[theorem]{Lemma}
\newtheorem{proposition}[theorem]{Proposition}
\newtheorem{corollary}[theorem]{Corollary}
\theoremstyle{definition}
\newtheorem{question}[theorem]{Question}
\theoremstyle{remark}
\newtheorem{remark}[theorem]{Remark}
\newcommand{\cq}{\chi_q}
\newcommand{\cstar}{\chi_{C^*}}
\newcommand{\thbar}{\bar\vartheta}
\newcommand{\one}{\mathbf{1}}
\newcommand{\R}{\mathbb{R}}
\newcommand{\C}{\mathbb{C}}
\newcommand{\F}{\mathbb{F}}

\newcommand{\cA}{\mathcal{A}}
\newcommand{\cE}{\mathcal{E}}
\newcommand{\cO}{\mathcal{O}}
\newcommand{\cK}{\mathcal{K}}
\DeclareMathOperator{\tr}{tr}
\DeclareMathOperator{\dist}{dist}
\DeclareMathOperator{\su}{sum}
\newcommand{\br}[1]{[#1]}
\setlist{itemsep=2pt,topsep=4pt}
\newcommand{\appendixnumberline}[1]{}
\title{A counterexample to the quantum Hedetniemi conjecture}
\author{Julius A. Zeiss\thanks{\raggedright Institute for Quantum Information, RWTH Aachen University, Aachen, Germany. Email: \href{mailto:jzeiss@physik.rwth-aachen.de}{jzeiss@physik.rwth-aachen.de}.}}
\date{17 September 2026}
\hypersetup{pdftitle={A counterexample to the quantum Hedetniemi conjecture},pdfauthor={Julius A. Zeiss}}
\begin{document}
\maketitle

\begin{abstract}
Godsil, Roberson, \v{S}\'amal and Severini conjectured that the quantum
chromatic number of the categorical product of two graphs equals the minimum
of the quantum chromatic numbers of the factors. We disprove this conjecture:
we construct explicit finite graphs $G,H$ with
\[
 \chi(G\times H)\le1538<1539=\min\{\cq(G),\cq(H)\}.
\]
The graphs are obtained from Zhu's counterexample to Hedetniemi's conjecture
by using a base graph for which the Lov\'asz theta number of the complement,
and not only the fractional chromatic number, is large. The lower bound for
the first factor is the theta bound. For the second factor we adapt Zhu's
argument to projections that do not commute: the step that fixes the colors
of a clique is replaced by identities between operators. Both lower bounds
hold for colorings by projections in an arbitrary nonzero unital
$C^*$-algebra. Hence the conjecture also fails for the
spatial, approximate, commuting-operator and $C^*$-algebraic variants of the
quantum chromatic number. We also give smaller counterexamples certified by
exact integer data. The graph constructions, the certificates and the
counterexample statements in the projective formulation are formalized in
Lean~4.
\end{abstract}

\setcounter{tocdepth}{1}
\tableofcontents

\section{Introduction and main results}\label{sec:intro}

All graphs are finite and simple, with nonempty vertex set. We write
$u\sim v$ for adjacency and $[n]=\{0,\ldots,n-1\}$. The categorical product
$G\times H$ of graphs $G,H$ has vertex set $V(G)\times V(H)$, and
$(x,y)\sim(x',y')$ if and only if $x\sim x'$ and $y\sim y'$.

Hedetniemi's conjecture asserted that
$\chi(G\times H)=\min\{\chi(G),\chi(H)\}$ for all graphs $G,H$. It was
disproved by Shitov~\cite{Shitov}, and Zhu gave much smaller
counterexamples~\cite{Zhu}. Godsil, Roberson, \v{S}\'amal and Severini
studied the same equality for three other graph parameters: the Lov\'asz
theta number of the complement $\thbar$, the vector chromatic number
$\chi_{\mathrm{vec}}$, and the quantum chromatic number $\cq$. They proved it
for $\thbar$ \cite[Theorem~4.6]{GRSS} and conjectured it for the other two
parameters \cite[Section~8]{GRSS}. The equality for $\chi_{\mathrm{vec}}$ was
later proved by Godsil, Roberson, Rooney, \v{S}\'amal and
Varvitsiotis~\cite{GRRSV}. The quantum case,
\begin{equation}\label{eq:conjecture}
 \cq(G\times H)=\min\{\cq(G),\cq(H)\},
\end{equation}
which we call the \emph{quantum Hedetniemi conjecture}, remained open
\cite[Section~2]{ZhuSurvey}. In this paper we show that
\eqref{eq:conjecture} fails. A counterexample to Hedetniemi's conjecture does
not by itself refute \eqref{eq:conjecture}, because the quantum chromatic
numbers of the factors can be smaller than their chromatic numbers.

\begin{figure}[tbp]
\centering
\begingroup
\definecolor{question}{HTML}{246B8E}
\definecolor{reply}{HTML}{C05A36}
\definecolor{shared}{HTML}{22856C}
\definecolor{ink}{HTML}{253340}
\definecolor{muted}{HTML}{B8C2CA}
\resizebox{\linewidth}{!}{%
\begin{tikzpicture}[x=1cm,y=1cm,font=\fontsize{10}{12}\selectfont,text=ink,
  box/.style={draw=muted,line width=.7pt,rounded corners=1.5mm,fill=white,align=center},
  questionarrow/.style={-{Stealth[length=2mm,width=1.4mm]},draw=question,line width=1pt},
  replyarrow/.style={-{Stealth[length=2mm,width=1.4mm]},draw=reply,line width=1pt},
  preparearrow/.style={-{Stealth[length=1.8mm,width=1.3mm]},draw=shared,line width=.85pt,dashed},
  small/.style={font=\fontsize{9}{11}\selectfont},
  title/.style={font=\fontsize{10}{12}\selectfont\bfseries}]

\path[use as bounding box] (-.08,-.12) rectangle (15.06,5.78);

\node[box,minimum width=3.7cm,minimum height=1.0cm] (referee) at (4.65,5.05) {};
\node[title] at (4.65,5.27) {Referee};
\node[small] at (4.12,4.84) {public graph $X$};

\begin{scope}[shift={(5.8,4.85)}]
  \draw[muted,line width=.7pt] (-.25,-.2)--(-.25,.2)--(.15,.28)--(.37,-.1)--(-.25,-.2);
  \draw[muted,line width=.7pt] (-.25,.2)--(.37,-.1);
  \foreach \x/\y in {-.25/-.2,-.25/.2,.15/.28,.37/-.1}
    \fill[ink] (\x,\y) circle[radius=.045];
\end{scope}

\draw[questionarrow] (3.1,4.55) to[out=210,in=90] (1.0,3.35);
\node[small,text=question,fill=white,inner sep=1.3pt] at (1.35,4.12) {question $u$};
\draw[replyarrow] (2.35,3.35) to[out=90,in=240] (4.02,4.55);
\node[small,text=reply,fill=white,inner sep=1.3pt] at (3.29,3.66) {color $a$};
\draw[questionarrow] (6.2,4.55) to[out=330,in=90] (8.3,3.35);
\node[small,text=question,fill=white,inner sep=1.3pt] at (7.95,4.12) {question $v$};
\draw[replyarrow] (6.95,3.35) to[out=90,in=300] (5.28,4.55);
\node[small,text=reply,fill=white,inner sep=1.3pt] at (6.01,3.66) {color $b$};

\node[box,draw=question!65,minimum width=3.15cm,minimum height=1.5cm] (alice) at (1.7,2.6) {};
\node[title] at (1.7,3.0) {Alice};
\node[small] at (1.7,2.55) {POVM $\{A_{u,a}\}_{a=0}^{k-1}$};
\node[small] at (1.7,2.10) {$a\in\{0,\ldots,k-1\}$};
\node[box,draw=question!65,minimum width=3.15cm,minimum height=1.5cm] (bob) at (7.6,2.6) {};
\node[title] at (7.6,3.0) {Bob};
\node[small] at (7.6,2.55) {POVM $\{B_{v,b}\}_{b=0}^{k-1}$};
\node[small] at (7.6,2.10) {$b\in\{0,\ldots,k-1\}$};
\node[small,align=center] at (4.65,2.61) {Players cannot\\communicate\\during the round};

\node[box,draw=shared!75,fill=shared!5,minimum width=5.55cm,minimum height=.91cm] (state) at (4.65,.56) {};
\node[title,text=shared] at (4.65,.78) {Arbitrary shared state $\rho$};
\node[small] at (4.65,.38) {$\mathcal H_A\otimes\mathcal H_B$; finite local dimensions};
\draw[preparearrow] (state.west) to[out=180,in=270] (alice.south);
\draw[preparearrow] (state.east) to[out=0,in=270] (bob.south);
\node[small,text=shared,fill=white,inner sep=1pt] at (4.65,1.59) {no dimension bound fixed in advance};
\node[small,text=shared,fill=white,inner sep=1pt] at (4.65,1.20) {prepared before questions};

\draw[box,fill=ink!2] (9.88,.14) rectangle (15.04,5.58);
\node[title] at (12.46,5.23) {Winning rules};
\node[small] at (12.46,4.82) {for each received pair $u,v$};
\draw[muted,line width=.5pt] (10.2,4.52)--(14.72,4.52);
\node[small,anchor=west] at (10.28,4.17) {Same vertex};
\node at (12.46,3.72) {$u=v\quad\Longrightarrow\quad a=b$};
\node[small,anchor=west] at (10.28,3.05) {Adjacent vertices};
\node at (12.46,2.60) {$u\sim v\quad\Longrightarrow\quad a\ne b$};
\node[small,anchor=west] at (10.28,1.93) {Distinct, nonadjacent vertices};
\node[small,align=center] at (12.46,1.42) {No restriction on the colors};
\draw[muted,line width=.5pt] (10.2,.98)--(14.72,.98);
\node[small,align=center] at (12.46,.56) {A perfect strategy never loses.};
\end{tikzpicture}%
}
\endgroup
\caption{Quantum coloring game for $X$ in the finite-dimensional
 tensor-product model. Alice and Bob share an arbitrary density operator
 $\rho$ on $\mathcal H_A\otimes\mathcal H_B$, with both local dimensions
 finite and no dimension bound fixed in advance. On receiving vertices
 $u,v\in V(X)$, they apply local positive operator-valued measurements
 (POVMs) $\{A_{u,a}\}_a$ and $\{B_{v,b}\}_b$, respectively, giving
 $p(a,b\mid u,v)=\operatorname{Tr}[\rho(A_{u,a}\otimes B_{v,b})]$.
 A strategy is perfect if it satisfies the displayed winning rules for
 every input pair. The quantum chromatic number $\chi_q(X)$ is the least
 number $k$ of colors admitting such a strategy.}
\label{fig:quantum-coloring-game}
\end{figure}

\paragraph{Quantum colorings.}
In the coloring game for a graph $X$ with $k$ colors
(Figure~\ref{fig:quantum-coloring-game}), Alice and Bob receive vertices of
$X$ and each answers with a color in $[k]$. They win if they answer equal
vertices with equal colors and adjacent vertices with different colors. They
cannot communicate during the game, but they may share an entangled state. A
strategy is \emph{perfect} if it wins with probability one for every pair of
vertices. The \emph{quantum chromatic number} $\cq(X)$ is the least $k$ for
which a perfect strategy exists in the \emph{finite-dimensional
tensor-product model}, that is, with a state on a tensor product
$\mathcal H_A\otimes\mathcal H_B$ of finite-dimensional Hilbert spaces and
local measurements.

We use the following equivalent formulation, which we call the
\emph{projective formulation}. A \emph{projection} in a unital $C^*$-algebra
$\cA$ with unit $I$ is an element $P$ with $P=P^*=P^2$. Let $k\ge1$ be an
integer. A
\emph{$k$-coloring of $X$ in $\cA$} is a family of projections
$P_{v,a}\in\cA$, $v\in V(X)$, $a\in[k]$, such that
\begin{equation}\label{eq:relations}
 \sum_{a\in[k]}P_{v,a}=I,\qquad
 P_{v,a}P_{v,b}=0\quad(a\ne b),\qquad
 P_{u,a}P_{v,a}=0\quad(u\sim v).
\end{equation}
The second relation follows from the first (fact~\ref{C:pvm} in
Section~\ref{sec:theta}); we list it for convenience. The number $\cq(X)$ is
the least $k$ such that $X$ has a $k$-coloring in the matrix algebra
$M_d(\C)$ for some integer $d\ge1$: a perfect strategy gives such a coloring
by \cite[Proposition~1 and Eq.~(4)]{CMNSW}, and both implications are stated
in \cite[Corollary~2.2]{MR} (for quantum homomorphisms to the complete graph
$K_k$) and in \cite[Theorem~3.2(2)]{HMPS}. The dimension
$d$ is finite, but no upper bound on it is fixed in advance. For every
vertex $v$ the family $(P_{v,a})_{a\in[k]}$ is a projection-valued
measurement in $\cA$; Figure~\ref{fig:projective} summarizes this
formulation.

\begin{figure}[t]
\centering
\begin{tikzpicture}[x=1cm,y=1cm,
  >={Stealth[length=2mm]},
  dot/.style={circle,fill=black,inner sep=1.45pt},
  qarrow/.style={->,line width=.6pt,draw=blue!48!black},
  alink/.style={densely dashed,line width=.55pt,draw=green!40!black},
  measure/.style={draw=blue!48!black,rounded corners=3pt,fill=blue!6,
    align=center,inner sep=5pt,font=\small},
  algebra/.style={draw=green!40!black,rounded corners=3pt,fill=green!7,
    align=center,inner sep=5pt,font=\small},
  panel/.style={draw=black!45,rounded corners=4pt,fill=black!4,
    align=center,inner xsep=8pt,inner ysep=7pt,font=\small}]
  \node[font=\small\bfseries] at (0,5.45) {finite graph $X$};
  \coordinate (x1) at (0,5.00);
  \coordinate (x2) at (-.95,4.30);
  \coordinate (x5) at (.95,4.30);
  \coordinate (u)  at (-.59,3.20);
  \coordinate (v)  at (.59,3.20);
  \draw[line width=.55pt,black!75] (x1)--(x2)--(u) (v)--(x5)--(x1);
  \draw[line width=.9pt] (u)--(v);
  \foreach \p in {x1,x2,x5} \node[dot,fill=black!75] at (\p) {};
  \node[dot] at (u) {};
  \node[dot] at (v) {};
  \node[below right=0pt and 1pt] at (u) {$u$};
  \node[below left=0pt and 1pt] at (v) {$v$};
  \node[font=\footnotesize] at (0,3.44) {$u\sim v$};
  \node[measure,text width=2.9cm] (pu) at (-3.05,1.50)
    {\textbf{measurement at $u$}\\[1pt]
     projections $P_{u,a}$\\[1pt]
     {\footnotesize one per color $a\in\br{k}$}};
  \node[measure,text width=2.9cm] (pv) at (3.05,1.50)
    {\textbf{measurement at $v$}\\[1pt]
     projections $P_{v,a}$\\[1pt]
     {\footnotesize one per color $a\in\br{k}$}};
  \draw[qarrow] (u) -- (pu.north);
  \draw[qarrow] (v) -- (pv.north);
  \node[align=center,font=\footnotesize,text=black!70,text width=2.4cm]
    at (0,1.50)
    {measurements at different vertices need not commute};
  \node[algebra,text width=6.8cm] (alg) at (0,-1.10)
    {\textbf{one nonzero unital $C^*$-algebra $\mathcal A$}\\[1pt]
     {\footnotesize all $P_{v,a}$ lie in $\mathcal A$; $I$ is its unit}\\[2pt]
     {\footnotesize $\cq$:\ $\mathcal A=M_d(\C)$, $d$ finite, no bound
      fixed}\\[1pt]
     {\footnotesize $\cstar$:\ $\mathcal A$ arbitrary}};
  \draw[alink] (pu.south) -- (pu.south |- alg.north);
  \draw[alink] (pv.south) -- (pv.south |- alg.north);
  \node[panel,text width=4.6cm] at (7.55,1.55)
    {\textbf{Coloring relations}\\[1pt]
     {\footnotesize\textcolor{black!60}{for every vertex, edge, and
      color}}\\[3pt]
     \rule{\linewidth}{.4pt}\\[5pt]
     Completeness at each vertex\\[1pt]
     $\sum_{a\in\br{k}}P_{v,a}=I$\\[6pt]
     Same vertex, distinct colors\\[1pt]
     $P_{v,a}P_{v,b}=0\ \ (a\ne b)$\\[6pt]
     Adjacent vertices, same color\\[1pt]
     $P_{u,a}P_{v,a}=0\ \ (u\sim v)$\\[6pt]
     Distinct, nonadjacent\\ vertices\\[1pt]
     {\footnotesize no relation imposed}\\[3pt]
     \rule{\linewidth}{.4pt}\\[5pt]
     The least such $k$ is $\cq(X)$,\\
     resp.\ $\cstar(X)$.};
\end{tikzpicture}
\caption{The projective formulation \eqref{eq:relations} of quantum
coloring, shown at one edge $uv$ of $X$. Every vertex carries a
projection-valued measurement with one outcome per color; all
measurements lie in a single algebra, with no commutativity assumed
between the projections at different vertices. The relations mirror
the rules of the coloring game: agreement on equal vertices, distinct
colors on adjacent vertices, and no constraint on distinct nonadjacent
pairs \cite[Proposition~1]{CMNSW}. For $\cq$ the algebra is a matrix algebra
$M_d(\C)$ with $d$ finite but unrestricted; an arbitrary nonzero unital
$C^*$-algebra defines $\cstar$.}
\label{fig:projective}
\end{figure}

The \emph{$C^*$-chromatic number} $\cstar(X)$ is the least $k$ such that $X$
has a $k$-coloring in some nonzero unital $C^*$-algebra. This parameter was
introduced by Ortiz and Paulsen \cite[Definition~4.9]{OP}, who define it
through projections on a Hilbert space, tacitly assumed to be nonzero, that
satisfy the first and the third relation in \eqref{eq:relations}. It is the parameter of
\cite[Definition~3.12]{HMPS} by \cite[Proposition~3.11]{HMPS}. Our definition
agrees with these, because every nonzero unital $C^*$-algebra has a faithful
unital representation on a nonzero Hilbert space by the Gelfand--Naimark
theorem \cite[Section~3.4]{Murphy}.

An \emph{ordinary $k$-coloring} of $X$ is a map $\varphi\colon V(X)\to[k]$
with $\varphi(u)\ne\varphi(v)$ whenever $u\sim v$; the chromatic number
$\chi(X)$ is the least $k$ for which one exists. An ordinary $k$-coloring
$\varphi$ gives the $k$-coloring $P_{v,a}=\delta_{a,\varphi(v)}$ of $X$ in
$\C$. A $k$-coloring in $\cA$
extends to a $k'$-coloring in $\cA$ for every $k'\ge k$ by setting
$P_{v,a}=0$ for $a\ge k$. Hence $\cstar(X)\le\cq(X)\le\chi(X)$, and $X$ has a
$k$-coloring in some nonzero unital $C^*$-algebra if and only if
$k\ge\cstar(X)$. The inequality $\le$ in \eqref{eq:conjecture} holds because
a $k$-coloring $(P_{x,a})$ of $G$ gives the $k$-coloring
$P_{(x,y),a}=P_{x,a}$ of $G\times H$, and similarly for $H$.

\paragraph{Other models.}
The parameters $\chi_{qs}$, $\chi_{qa}$ and $\chi_{qc}$ are defined like
$\cq$, with perfect strategies in the spatial, approximate and
commuting-operator models; see \cite[Section~2]{KPS} for the four models,
\cite[Definition~2.1(ii)]{PT} for $\chi_{qs}$, and \cite[Section~2]{PSSTW} for
$\chi_{qa}$ and $\chi_{qc}$. The spatial model allows arbitrary
Hilbert spaces $\mathcal H_A,\mathcal H_B$. The approximate model allows
limits of correlations of finite-dimensional strategies. The
commuting-operator model uses a state on one Hilbert space and two families
of measurements such that every operator of Alice commutes with every
operator of Bob. For every graph $X$,
\begin{equation}\label{eq:hierarchy}
 \cstar(X)\le\chi_{qc}(X)\le\chi_{qa}(X)\le\chi_{qs}(X)=\cq(X)\le\chi(X).
\end{equation}
The inequalities between $\chi_{qc}$, $\chi_{qa}$, $\chi_{qs}$, $\cq$ and
$\chi$ are stated in \cite[Section~1]{PSSTW}. The inequality
$\cstar\le\chi_{qc}$ is \cite[Proposition~3.13]{HMPS}: by
\cite[Theorem~3.2(3)]{HMPS}, $\chi_{qc}(X)$ is the least $k$ such that $X$ has a
$k$-coloring in a unital $C^*$-algebra with a faithful tracial state, whereas
no tracial state is required for $\cstar$. The equality $\chi_{qs}=\cq$ holds
because the coloring game is \emph{synchronous}, that is, equal questions
require equal answers: a synchronous game has a perfect strategy in the
spatial model if and only if it has one in the finite-dimensional
tensor-product model \cite[Corollary~3.11]{KPS}.

\paragraph{Main results.}
A real symmetric matrix $M$ indexed by $V(X)$ is \emph{supported on $X$} if
$M_{uv}=0$ whenever $u\ne v$ and $u\not\sim v$. We write
$\su(M)=\sum_{u,v}M_{uv}$. For a graph $F$ and an integer $t\ge1$, the
lexicographic product $F[K_t]$ has vertex set $V(F)\times[t]$, and
$(u,i)\sim(v,j)$ if and only if $u\sim v$, or $u=v$ and $i\ne j$.
Thus $F[K_t]$ is obtained from $F$ by replacing every vertex by a clique
of order $t$.

\Needspace{12\baselineskip}
\begin{theorem}\label{thm:main}
Let $F$ be the graph on $1024$ vertices defined in Section~\ref{sec:base},
let $G=F[K_{512}]$, and let $H=H_{512}$ be the graph defined in
Section~\ref{sec:gadget}. Then $|V(G)|=524288$, $|V(H)|=1576451$, and the
following hold.
\begin{enumerate}[(a)]
\item $\chi(G\times H)\le1538$.
\item Neither $G$ nor $H$ has a $1538$-coloring in any nonzero unital
$C^*$-algebra. More precisely, $\cstar(G)\ge1639$ and
$\cstar(H)=\chi(H)=1539$.
\end{enumerate}
Consequently, every graph parameter $\pi$ with
$\cstar(X)\le\pi(X)\le\chi(X)$ for all graphs $X$ satisfies
\begin{equation}\label{eq:main}
 \pi(G\times H)\le1538<1539=\min\{\pi(G),\pi(H)\}.
\end{equation}
By \eqref{eq:hierarchy} this applies to $\cq$, $\chi_{qs}$, $\chi_{qa}$,
$\chi_{qc}$ and $\cstar$. In particular, \eqref{eq:conjecture} fails.
\end{theorem}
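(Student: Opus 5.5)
Our plan follows Zhu's template for Hedetniemi counterexamples, with the two lower bounds in (b) handled separately. We recall the structure. $G=F[K_n]$ with $n=512$. $H=H_n$ is Zhu's gadget: its vertices are essentially the ordinary colorings $\varphi$ of $G$ with $c=3n+2=1538$ colors that are ``compressed'' in a suitable sense, together with a clique of $c+1=1539$ extra vertices. Adjacency is defined so that $G\times H$ admits an explicit $c$-coloring. The vertex count $|V(H)|=1576451$ is then a finite count of the vertex types in Section~\ref{sec:gadget}, and $|V(G)|=1024\cdot512=524288$ is immediate.

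For (a), we use Zhu's coloring rule. A vertex $((x,i),\varphi)$ with $\varphi$ a gadget vertex receives color $\varphi(x,i)$. A vertex $((x,i),j)$ in the extra clique receives color $j$ when $j<c$. The few remaining cases are colored by a fixed fallback determined by the clique structure of $G$. One then checks the edge cases one by one from the adjacency definition of $H$. This is a routine verification that mirrors Zhu, and it is also formalized.

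For $\cstar(G)\ge1639$, we use the theta bound. For any $k$-coloring in a nonzero unital $C^*$-algebra one has $k\ge\thbar(X)$, where $\thbar$ denotes the theta number of the complement in the convention of \cite{GRSS}. The proof picks a state $\omega$ on $\cA$ and forms the Gram-type matrix $M_{(u,a),(v,b)}=\omega(P_{u,a}P_{v,b})$. Its real part is positive semidefinite, and the coloring relations produce a feasible point for the dual of the theta program. Next, $\thbar$ is multiplicative under lexicographic substitution of a clique: $\thbar(F[K_n])=n\,\thbar(F)$. It therefore suffices to certify $\thbar(F)>1638/512$. Section~\ref{sec:base} provides this through an exact rational certificate, namely a matrix $M$ supported on $F$ with $\su(M)$ and a trace bound giving the inequality. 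We would verify the certificate in exact arithmetic.

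For $\cstar(H)\ge1539$, the main obstacle is to adapt Zhu's argument to noncommuting projections. We suppose a $c$-coloring $(P_{v,a})$ of $H$ exists in $\cA$. Classically, one fixes the colors of the $(c+1)$-clique and derives a contradiction. In the operator setting, one cannot ``fix'' colors. Instead we would derive operator identities: for a clique $Q$ of size $c$, one has $\sum_{v\in Q}P_{v,a}=I$ for each color $a$. This holds because the clique relations make $\{P_{v,a}\}_{v\in Q}$ orthogonal, and summing over $a$ gives total $cI$. These identities let us transport projections along the gadget: a vertex $\varphi$ adjacent to all but one member of a clique must have $P_{\varphi,a}\le P_{v,a}$ for the remaining member $v$. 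Chaining these inequalities through the gadget should force some projection to vanish, or force $I=0$. That contradicts nonzeroness. What makes this feasible is that $c$ cannot exceed a bound tied to $\cstar(G)\ge1639>c$: Zhu's proof uses $\chi(G)>c$ to exhibit a monochromatic edge. Here the required input is exactly that $G$ has no $c$-coloring in any $C^*$-algebra, which we have just proved. The matching upper bound $\chi(H)\le1539$ comes from an explicit coloring. Finally, (\ref{eq:main}) follows by sandwiching with $\cstar\le\pi\le\chi$ and (\ref{eq:hierarchy}).
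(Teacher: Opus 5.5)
\textbf{There is a genuine gap: the proposal gives no working argument for $\cstar(H)\ge1539$, which is the heart of the theorem.}

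The only operator fact you derive is the domination $P_{\varphi,a}\le P_{v,a}$ for a vertex adjacent to all but one member of a clique of order $c$. In $H_{512}$ the only clique of order $c$ is the set of anchors. Every non-anchor vertex is nonadjacent to at least two anchors, since its list has $2$, $2t$ or $2t+1$ elements. So this domination never applies, and there is nothing to chain. The obstruction lies at the center $f$, which has no definite color, and at the vertices $h_{i,b}$ with two-element lists $\{i,b\}$. Zhu's step ``$f$ has color $i$, so $h_{i,b}$ has color $b$, so $z_{i,b}$ does not'' has no analogue as inequalities between noncommuting projections. ``Chaining should force some projection to vanish'' is a hope, not a proof.

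The missing idea is the paper's:
\begin{enumerate}
\item Form the magic unitary $Q_{b,a}=P_{e_b,a}$ and the positive contractions $D_{v,b}=\sum_aQ_{b,a}P_{v,a}Q_{b,a}$.
\item Prove $D_{v,b}+D_{v,b'}=I$ when $L(v)=\{b,b'\}$, using that $(Q_{b,a}+Q_{b',a}-P_{v,a})_a$ is again a projection-valued measurement.
\item Prove $D_{v,b}D_{u,b}=0$ when $u\sim v$ and $L(u)\cap L(v)=\{b\}$.
\item With $\Phi=D_{f,i}$, these give $D_{z_{i,b},b}\Phi=0$, hence $P_{v,a}\Phi=P_{v,a}Q_{\Lambda_i,a}\Phi$ on the clique $C_i$.
\item The count over $C_i$ becomes $(2t+1)\Phi^2\le2t\Phi^2$, which forces $\Phi=0$. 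Then $D_{f,r}=0$ for all labels $r$ forces $P_{f,a}=0$.
\end{enumerate}
Your claim that this step needs $\cstar(G)>c$ is also mistaken. Proposition~\ref{prop:H} concerns $H_t$ alone and holds for every $t$, independently of $F$ and $G$.

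\textbf{Your description of $H$ is not usable either.} Since $\chi(G)\ge\cstar(G)\ge1639$, there are no ordinary $1538$-colorings of $G$, so vertices ``that are colorings of $G$'' do not exist. In the paper each vertex $y$ of $H_t$ determines only a map $\psi_y\colon V(G)\to[c]$, and such a map is never a proper coloring.

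More seriously, $H$ must not contain a clique of order $c+1$. If $\kappa$ were a proper $c$-coloring of $G\times K_{c+1}$, assign to each $x\in V(G)$ a color occurring twice in $i\mapsto\kappa(x,i)$. This is a proper $c$-coloring of $G$, which is impossible. So with your $H$, part (a) would be false. The anchors form a clique of order $c$ only.

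Accordingly, part (a) is not a ``fallback'' scheme. It is Zhu's map $(x,y)\mapsto\psi_y(x)$, defined through the partition of $V(F)$ into $E_i$, $O_i$, $T_i$. Its verification uses that $F$ has no closed walk of length three or five (Lemma~\ref{lem:certificate}(a) and Lemma~\ref{lem:regions}), which your proposal never mentions.

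\textbf{The bound $\cstar(G)\ge1639$ is essentially fine.} Building a strict vector coloring from the real part of $(\omega(P_{u,a}P_{v,b}))$ for a state $\omega$ is a valid alternative to the state-free Lemma~\ref{lem:psd}. You only need $\thbar(F[K_t])\ge t\,\thbar(F)$, which the matrix $M\otimes J_t$ gives. The certificate is $M=A+10I$, positive semidefinite by $(A+10I)(A+2I)(A-6I)=12J$, with $\su(M)/\tr M=16/5>1638/512$.
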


Apart from the orders of $G$ and $H$ and the bound $\cstar(G)\ge1639$,
Theorem~\ref{thm:main} follows from the case $t=512$ of the following
statement; its hypotheses are verified for the graph $F$ of
Section~\ref{sec:base} in Lemma~\ref{lem:certificate}.

\begin{theorem}\label{thm:general}
Let $t\ge1$ be an integer, and let $p=2t$ and $c=3t+2$. Let $F$ be a graph
with vertex set $[p]$ that has no closed walk of length three or five. Suppose that there is a real
positive-semidefinite matrix $M$ supported on $F$ with
\begin{equation}\label{eq:theta-hypothesis}
 t\su(M)>c\tr M.
\end{equation}
Let $G=F[K_t]$ and let $H=H_t$ be the graph defined in
Section~\ref{sec:gadget}. Then:
\begin{enumerate}[(a)]
\item $\chi(G\times H)\le c$;
\item $G$ has no $c$-coloring in any nonzero unital $C^*$-algebra;
\item $H$ has no $c$-coloring in any nonzero unital $C^*$-algebra, and
$\chi(H)\le c+1$.
\end{enumerate}
Consequently, every graph parameter $\pi$ with
$\cstar(X)\le\pi(X)\le\chi(X)$ for all graphs $X$ satisfies
$\pi(G\times H)\le c<c+1=\min\{\pi(G),\pi(H)\}$.
\end{theorem}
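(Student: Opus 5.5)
The plan is to prove the three items separately. The final sentence of the theorem is then formal: $\pi(G\times H)\le\chi(G\times H)\le c$ by (a). By (b) and (c), together with the observation in the introduction that a $c$-coloring in a nonzero unital $C^*$-algebra exists iff $c\ge\cstar$, we get $\pi(G),\pi(H)\ge\cstar\ge c+1$. Also $\chi(H)\le c+1$. For $G$, note that $\chi(G)\le c+1$ because $F$ has no closed walk of length three, and $p=2t\le c$ colors suffice anyway. In fact a $2$-coloring of a bipartite $F$ blown up gives $2t<c$ colors, which would contradict (b). So $F$ is not bipartite, and has odd girth at least $7$. In any case $\min\{\pi(G),\pi(H)\}=c+1$ follows once $\pi(H)\le\chi(H)\le c+1$.

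\textbf{Item (b), the theta bound in operator form.} I first lift $M$ to $M'=M\otimes J_t$ on $V(G)=[p]\times[t]$. It is positive semidefinite and supported on $G$: off-diagonal entries inside a clique $\{u\}\times[t]$ are allowed, and otherwise $M'_{(u,i)(v,j)}=M_{uv}$ vanishes unless $u\sim v$. It satisfies $\su(M')=t^2\su(M)$ and $\tr M'=t\tr M$, so \eqref{eq:theta-hypothesis} becomes $\su(M')>c\tr M'$. It then suffices to show that a $c$-coloring $(P_{v,a})$ of any graph $X$ in a nonzero unital $\cA$ forces $\su(N)\le c\tr N$ for every PSD $N$ supported on $X$.

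To do this, write $N_{uv}=\langle m_u,m_v\rangle$ with $m_v\in\R^n$, and put $S_a=\sum_v m_v\otimes P_{v,a}\in\R^n\otimes\cA$. Then
\[
S_a^*S_a=\sum_{u,v}N_{uv}P_{u,a}P_{v,a}=\sum_v N_{vv}P_{v,a},
\]
since for $u\ne v$ either $N_{uv}=0$ or $u\sim v$ and $P_{u,a}P_{v,a}=0$. Summing over $a$ gives $\sum_aS_a^*S_a=(\tr N)I$, while $\bigl(\sum_aS_a\bigr)^*\bigl(\sum_aS_a\bigr)=\su(N)I$. The operator Cauchy--Schwarz inequality $\bigl(\sum_{a<c}S_a\bigr)^*\bigl(\sum_{a<c}S_a\bigr)\le c\sum_aS_a^*S_a$ holds because $\sum_{a,b}(S_a-S_b)^*(S_a-S_b)\ge0$. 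It gives $\su(N)I\le c\tr(N)I$, and since $I\ne0$ we get $\su(N)\le c\tr N$, the required contradiction.

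\textbf{Item (a) and $\chi(H)\le c+1$.} Both should be explicit ordinary colorings read off from the definition of $H_t$ in Section~\ref{sec:gadget}. Following Zhu, I expect $H_t$ to be built so that every vertex of $H$ carries data that, paired with a vertex $(u,i)$ of $G$, determines a color in $[c]$. Properness on edges of $G\times H$ would then use that $F$ has no closed walks of length $3$ or $5$, which is exactly the property Zhu's verification needs. The bound $\chi(H)\le c+1$ should be a direct coloring from the same data. This is routine once the definition is in hand.

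\textbf{Item (c), the main obstacle.} Here Zhu's combinatorial argument must be made noncommutative. Zhu shows that $H$ has no $c$-coloring by fixing the colors on a large clique and propagating constraints through the gadget. In a $C^*$-coloring the clique has no fixed colors; instead, the projections $P_{w,a}$ on a $c$-clique $w_0,\dots,w_{c-1}$ satisfy $\sum_aP_{w_j,a}=I$ and are mutually orthogonal in each color. Hence $\sum_jP_{w_j,a}=I$ for each $a$, so $\{P_{w_j,a}\}$ is a ``magic'' unitary-like array. My plan is to replace each step ``vertex $w$ has color $a$'' by an identity such as $P_{x,b}=\sum_aP_{w_j,a}P_{x,b}P_{w_j,a}$, or more usefully $P_{x,b}P_{w_j,a}=0$ for specified pairs. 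These identities would be derived from adjacency using the rule that $P\le I-Q$ whenever $PQ=0$ for projections, together with completeness sums. Chaining them along the gadget, I expect to reach an identity of the form $I=\sum(\text{products that vanish})$, hence $I=0$, contradicting nonzeroness. Making each combinatorial deduction linear in projections, without commutativity, is the step I expect to be hardest. I would first try to use only deductions of the form ``$\sum$ of orthogonal projections equals $I$, so any projection orthogonal to all but one lies under the remaining one,'' since these survive noncommutativity.
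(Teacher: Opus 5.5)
\paragraph{Parts (b) and the final deduction.} Part (b) is correct and is essentially the paper's route. Lifting $M$ to $M\otimes J_t$ and using $\bigl(\sum_aS_a\bigr)^*\bigl(\sum_aS_a\bigr)\le c\sum_aS_a^*S_a$ repackages Lemma~\ref{lem:psd} and Corollary~\ref{cor:blowup}: expanding $\sum_a(cS_a-\sum_bS_b)^*(cS_a-\sum_bS_b)$ gives the paper's $\sum_a\sum_{u,v}M_{uv}Y_{u,a}Y_{v,a}$. The final deduction is also right. However, delete the aside that $\chi(G)\le c+1$. It is false: a $p$-coloring of $F$ gives $pt$ colors on $F[K_t]$, not $p$, and in Theorem~\ref{thm:main} one has $\cstar(G)\ge1639>c+1$. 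You do not need it, since the minimum is attained at $H$.

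\paragraph{Part (a) and $\chi(H)\le c+1$.} These are not proved in your proposal. The bound $\chi(H)\le c+1$ needs the explicit coloring $e_r\mapsto r$, $f\mapsto c$, $h_{i,b}\mapsto i$, $z_{i,b}\mapsto b$, $\ell_{i,o}\mapsto o$. For (a), ``routine'' undersells the work. You need Zhu's explicit maps $\psi_y$. For $y\in C_i$ they use the labels $\rho_i(j)$ on vertices at distance $0$ or $2$ from $i$, the labels $\rho_i(t+j)$ on the neighbours of $i$, and a private label $\lambda_y$ elsewhere. The absence of closed walks of length $3$ and $5$ is what makes the first two layers independent and separates the neighbours of $i$ from the rest.

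\paragraph{Part (c), the real gap.} This is the new content of the theorem, and your plan does not contain the mechanism. The one identity you propose, $P_{x,b}=\sum_aP_{w_j,a}P_{x,b}P_{w_j,a}$, holds if and only if $P_{x,b}$ commutes with every $P_{w_j,a}$; multiply it by $P_{w_j,a}$ on the left and on the right to see this. That commutation is exactly what is unavailable.

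Two steps of Zhu's argument are not of the form ``orthogonal to all but one, hence below the remaining one.''
\begin{itemize}
\item The case split on the color $i$ of the center $f$.
\item The pigeonhole on the clique $C_i$, which has $2t+1$ vertices but only $2t$ labels in $\Lambda_i$. It cannot be done by counting dimensions, since a general $C^*$-algebra has no trace.
\end{itemize}

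The paper proceeds as follows.
\begin{itemize}
\item It uses the magic unitary $Q_{b,a}=P_{e_b,a}$. Your claim $\sum_bQ_{b,a}=I$ is right, but it needs the positivity argument $\sum_a(I-\sum_bQ_{b,a})=0$, not orthogonality alone.
\item ``$v$ has the color of $e_b$'' becomes the positive contraction $D_{v,b}=\sum_aQ_{b,a}P_{v,a}Q_{b,a}$, which in general is not a projection.
\item If $L(v)=\{b,b'\}$, then $D_{v,b}+D_{v,b'}=I$. This is proved by computing $ZZ^*$ in two ways for $Z=\sum_aQ_{b',a}P_{v,a}=\sum_aQ_{b,a}(Q_{b,a}+Q_{b',a}-P_{v,a})$.
\item If $u\sim v$ and $L(u)\cap L(v)=\{b\}$, then $D_{v,b}D_{u,b}=0$.
\item The case split becomes right multiplication by $\Phi=D_{f,i}$, which is never commuted past anything. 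This gives $D_{h_{i,b},b}\Phi=\Phi$, hence $D_{z_{i,b},b}\Phi=0$, hence $P_{z_{i,b},a}Q_{b,a}\Phi=0$.
\item The pigeonhole becomes the inequality $(2t+1)\Phi^2=\sum_a\Phi Q_{\Lambda_i,a}\bigl(\sum_{v\in C_i}P_{v,a}\bigr)Q_{\Lambda_i,a}\Phi\le\sum_a\Phi Q_{\Lambda_i,a}\Phi=2t\Phi^2$, which forces $\Phi=0$.
\item Then $D_{f,r}=0$ for all $r$; for $r\ge p$ this holds because $f\sim e_r$. So $P_{f,a}Q_{r,a}=0$, and $P_{f,a}=P_{f,a}\sum_rQ_{r,a}=0$, contradicting $\sum_aP_{f,a}=I$.
\end{itemize}
None of these ingredients appears in your proposal, so (c) remains unproved.
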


\begin{remark}\label{rem:hypotheses}
(i) Every closed walk of odd length $\ell$ contains a cycle of odd length at
most $\ell$, and every cycle is a closed walk. Hence $F$ has no closed walk
of length three or five if and only if $F$ has no cycle of length three or
five, that is, $F$ is bipartite or the odd girth of $F$ (the length of a
shortest odd cycle) is at least~$7$. A closed walk $u_0,u_1,u_2,u_0$ of
length three gives the closed walk $u_0,u_1,u_0,u_1,u_2,u_0$ of length five.
Hence it suffices to exclude closed walks of length five; the
formalization~\cite{zeiss-lean} uses the hypothesis in this form.

(ii) The Lov\'asz theta number of the complement of $X$ is
\[
 \thbar(X)=\max\{\su(M):M\succeq0\text{ supported on }X,\ \tr M=1\};
\]
see \cite[Theorem~4]{Lovasz}, applied to the complement of $X$, and
\cite[Section~3]{GRSS}. A matrix $M$ as in \eqref{eq:theta-hypothesis}
satisfies $\tr M>0$, because a positive-semidefinite matrix with trace zero
is the zero matrix; hence it can be scaled to have trace one. Therefore a
matrix $M$ as in
\eqref{eq:theta-hypothesis} exists if and only if $\thbar(F)>c/t=3+2/t$. If
$F$ is bipartite, then no matrix $M$ as in \eqref{eq:theta-hypothesis}
exists: an ordinary $2$-coloring of $F$ gives a $2$-coloring of $F$ in $\C$,
so Lemma~\ref{lem:psd} gives $\su(M)\le2\tr M$ for every real
positive-semidefinite matrix $M$ supported on $F$, and hence
$t\su(M)\le2t\tr M\le c\tr M$, because $\tr M\ge0$ and $2t\le c$. Hence every
graph $F$ as in Theorem~\ref{thm:general} has odd girth at least~$7$.

(iii) In \cite[Section~3]{Zhu}, Zhu constructs a graph $H$ from the order $p$
of a base graph $F$ and an integer $q\ge(p-1)/2$, with $c=3q+2$. The graph
$H_t$ is isomorphic to this graph for $p=2t$ and $q=t$: replace Zhu's index
set $\{1,\ldots,c\}$ by $[c]$ and rename the vertices. Theorem~3 of
\cite{Zhu} assumes that $F$ has odd girth $7$ and that
$\chi_f(F)>3+4/(p-1)$, where $\chi_f$ is the fractional chromatic number. Its
proof uses the second assumption only through the inequality $q\chi_f(F)>c$,
which for $q=t$ reads $\chi_f(F)>c/t$. Theorem~\ref{thm:general} has
\eqref{eq:theta-hypothesis} in place of this inequality, and (b) and (c) in
place of the conclusions $\chi(G)>c$ and $\chi(H)>c$. We have
$\thbar(X)\le\chi_f(X)$ for every graph $X$: apply
\cite[Theorem~10]{Lovasz} to the complement $\overline X$ of $X$; by linear
programming duality, the upper bound $\alpha^*(\overline X)$ given there
equals $\chi_f(X)$. Hence the hypothesis \eqref{eq:theta-hypothesis} implies
$\chi_f(F)>c/t$. The fractional chromatic number cannot be used here, because
it is not a lower bound for $\cq$. Indeed, for $4\mid n$ let $\Omega_n$ be the
graph with vertex set $\{\pm1\}^n$ in which two vectors are adjacent if and
only if they are orthogonal. Then $\cq(\Omega_n)=n$
\cite[Corollary~4.3]{MR}, whereas
$\chi_f(\Omega_n)\ge|V(\Omega_n)|/\alpha(\Omega_n)$, where $\alpha$ denotes
the independence number, grows exponentially in $n$ by the theorem of Frankl
and R\"odl~\cite{FR}.
\end{remark}

\begin{remark}\label{rem:theta-one-factor}
A bound by $\thbar$ cannot give the lower bound for both factors. Indeed,
$\thbar(X)\le\cq(X)$ for every graph $X$ \cite[Corollary~4.1]{MR}, and
$\thbar(G\times H)=\min\{\thbar(G),\thbar(H)\}$ \cite[Theorem~4.6]{GRSS}.
Hence $\chi(G\times H)\le c$ implies
$\min\{\thbar(G),\thbar(H)\}\le\cq(G\times H)\le c$. In
Theorem~\ref{thm:main} we have $\thbar(G)\ge8192/5>1538$ by
\eqref{eq:thetaG}, and therefore $\thbar(H)\le1538<1539=\cq(H)$. For
the same reason \eqref{eq:conjecture} holds whenever $\cq=\thbar$ for both
factors \cite[Theorem~7.3]{GRSS}.
\end{remark}

\paragraph{Overview of the proof.}
A counterexample consists of graphs $G,H$ and an integer $c$ with
$\cq(G\times H)\le c$, $\cq(G)>c$ and $\cq(H)>c$. Zhu's construction
\cite{Zhu} provides such graphs with $\chi$ in place of $\cq$. We use the
following case of it (Remark~\ref{rem:hypotheses}(iii)). Let $t\ge1$ be an
integer and $c=3t+2$. The graphs are $G=F[K_t]$ and $H=H_t$, where $F$ is a
suitable graph with $2t$ vertices and without cycles of length three or
five. We keep this form and choose a different graph $F$. Zhu's upper bound
carries over, because $\cq\le\chi$. His lower bounds do not carry over,
because $\cq$ can be smaller than $\chi$. We prove both lower bounds in a
different way.

\emph{The product.} Let $\psi_y\colon V(G)\to[c]$ be a map for every vertex
$y$ of $H$. The map $(x,y)\mapsto\psi_y(x)$ is an ordinary $c$-coloring of
$G\times H$ if and only if $\psi_y(x)\ne\psi_{y'}(x')$ whenever $x\sim x'$
and $y\sim y'$. Zhu defines such maps explicitly. Their definition uses
distances in $F$, and it works because $F$ has no cycle of length three or
five (Section~\ref{sec:product}).

\emph{The first factor.} Recall that $F[K_t]$ is obtained from $F$ by
replacing every vertex by a clique of order $t$. Zhu uses the bound
$\chi(F[K_t])\ge t\chi_f(F)$, where $\chi_f$ is the fractional chromatic
number. This bound does not help here, because $\chi_f$ is not a lower bound
for $\cq$ (Remark~\ref{rem:hypotheses}(iii)). The theta number $\thbar$ is a
lower bound for $\cq$ and for $\cstar$ (Lemma~\ref{lem:psd}). Moreover,
$\cstar(F[K_t])\ge t\,\thbar(F)$ (Remark~\ref{rem:hoffman}). So we need a
graph $F$ with $2t$ vertices and without cycles of length three or five such
that $\thbar(F)>c/t=3+2/t$. Our graph $F$ is a Cayley graph of the group
$\F_2^{10}$ (Section~\ref{sec:base}). It has $1024$ vertices, so $t=512$ and
$c=1538$. It has no cycle of length three or five
(Lemma~\ref{lem:certificate}(a)). It is $22$-regular, and the least
eigenvalue of its adjacency matrix is $-10$. Every $d$-regular graph $X$
whose adjacency matrix has least eigenvalue $\lambda_{\min}<0$ satisfies
$\thbar(X)\ge1+d/|\lambda_{\min}|$ (Remark~\ref{rem:hoffman}). Hence
$\thbar(F)\ge1+22/10=16/5$, and $\cstar(G)\ge512\cdot16/5>1538$.

\emph{The second factor: Zhu's argument.} The theta number cannot give the
lower bound for $H_t$ (Remark~\ref{rem:theta-one-factor}). We describe $H_t$
and Zhu's argument in words; the details are in Section~\ref{sec:gadget}.
The graph $H_t$ contains a clique of $c$ vertices $e_b$, $b\in[c]$, called
\emph{anchors}. Every other vertex $v$ has a \emph{list} $L(v)\subseteq[c]$,
and $v$ is adjacent to exactly those anchors $e_b$ with $b\notin L(v)$.
Consider an ordinary $c$-coloring of $H_t$. The anchors receive all $c$
colors, so we may assume that $e_b$ has color $b$. Then every other vertex
$v$ has a color in $L(v)$. One vertex $f$, the \emph{center}, has the list
$[2t]$. Let $i\in[2t]$ be its color. The graph $H_t$ contains a clique $C_i$
of $2t+1$ vertices. Every vertex of $C_i$ has the list $\Lambda_i$ or a list
$\Lambda_i\cup\{b\}$ with one further color $b$. Here $\Lambda_i\subseteq[c]$
is a set of $2t$ colors with $i\notin\Lambda_i$, and $b\notin[2t]$. Let $z$
be a vertex of $C_i$ with the list $\Lambda_i\cup\{b\}$. It has a neighbor
$h$ with the list $\{i,b\}$, and $h$ is adjacent to $f$. Since $f$ has color
$i$, the vertex $h$ has color $b$. Therefore $z$ does not have color $b$.
Hence all $2t+1$ vertices of $C_i$ have colors in $\Lambda_i$. This is
impossible, because $C_i$ is a clique and $\Lambda_i$ has only $2t$
elements.

\emph{The second factor: colorings in a $C^*$-algebra.} Now let
$(P_{v,a})$ be a $c$-coloring of $H_t$ in a nonzero unital $C^*$-algebra. The
color of a vertex is no longer a number, and we cannot assume that $e_b$ has
color $b$. Put $Q_{b,a}=P_{e_b,a}$. Every entry of the $c\times c$ matrix
$(Q_{b,a})$ is a projection, and every row and every column sums to $I$
(Lemma~\ref{lem:effects}(a)). Such a matrix is called a magic unitary. For an
ordinary coloring it is a permutation matrix. We replace the statement
``$v$ has the color of the anchor $e_b$'' by the operator
\[
 D_{v,b}=\sum_{a\in[c]}Q_{b,a}P_{v,a}Q_{b,a}.
\]
This operator and $I-D_{v,b}$ are positive elements of the algebra, but
$D_{v,b}$ need not be a projection. In place of the two deductions about $h$
and $z$ in Zhu's argument we use two identities
(Lemma~\ref{lem:effects}(d) and~(e)). If $L(v)=\{b,b'\}$ with $b\ne b'$,
then $D_{v,b}+D_{v,b'}=I$: the vertex $v$ has the color of $e_b$ or of
$e_{b'}$. If $u\sim v$ and $L(u)\cap L(v)=\{b\}$, then $D_{v,b}D_{u,b}=0$:
the vertices $u$ and $v$ do not both have the color of $e_b$. Fix $i\in[2t]$
and put $\Phi=D_{f,i}$. For $h$ and $z$ as above, the identities give
$D_{h,b}\Phi=\Phi$ and then $D_{z,b}\Phi=0$. With this we turn the counting
argument for the clique $C_i$ into a statement about $\Phi$. The $2t+1$
vertices of $C_i$ give the term $(2t+1)\Phi^2$, and the $2t$ elements of
$\Lambda_i$ give the term $2t\Phi^2$. The statement is that
$2t\Phi^2-(2t+1)\Phi^2=-\Phi^2$ is positive. Since $\Phi^2$ is positive as
well, this forces $\Phi=0$. For $b\notin[2t]$ the center $f$ is adjacent to
$e_b$, and $D_{f,b}=0$ follows directly. So the center has the color of no
anchor: $D_{f,b}=0$ for every $b\in[c]$. This gives $P_{f,a}Q_{b,a}=0$ for
all $a$ and $b$. Every column of $(Q_{b,a})$ sums to $I$, so $P_{f,a}=0$ for
every $a$. This contradicts $\sum_aP_{f,a}=I$ (Proposition~\ref{prop:H}).

Both lower bounds use only positivity in a $C^*$-algebra. They need no trace
and no bound on the dimension. Hence they hold for $\cstar$, and therefore
for every parameter in \eqref{eq:hierarchy}.

\paragraph{Outline.}
Section~\ref{sec:theta} proves the theta bound for colorings in a
$C^*$-algebra (Lemma~\ref{lem:psd}). For $\cq$ this bound is
\cite[Corollary~4.1]{MR}, and for $\cstar$ it is
\cite[Proposition~4.10]{OP}, where it is deduced from
a characterization of $\thbar$ by homomorphisms; we give a direct proof.
Section~\ref{sec:base} defines the base graph $F$ of
Theorem~\ref{thm:main} and verifies the hypotheses of
Theorem~\ref{thm:general} for it. Section~\ref{sec:gadget} defines $H_t$ and
proves part~(c) of Theorem~\ref{thm:general}; this is the new argument of the
paper. Section~\ref{sec:product} proves part~(a), following
\cite[Section~3]{Zhu}. Section~\ref{sec:proofs} completes the proofs,
Section~\ref{sec:conclusion} states open questions, and
Appendix~\ref{sec:smaller} gives smaller counterexamples. We make no claim
of minimality.

\section{The theta bound in a \texorpdfstring{$C^*$}{C*}-algebra}\label{sec:theta}

Let $\cA$ be a unital $C^*$-algebra. We write $x\ge0$ if $x$ is positive,
and $x\le y$ if $y-x\ge0$. We use the following standard facts
\cite[Sections~2.1 and~2.2]{Murphy}.
\begin{enumerate}[label=(C\arabic*),ref=(C\arabic*),leftmargin=*]
\item\label{C:cone} $x^*x\ge0$ for every $x\in\cA$. Sums of positive elements
are positive, and $x\ge0$, $-x\ge0$ imply $x=0$.
\item\label{C:conj} If $x\le y$, then $z^*xz\le z^*yz$ for every $z\in\cA$.
\item\label{C:cstar} If $x^*x=0$, then $x=0$, by the $C^*$-identity
$\|x^*x\|=\|x\|^2$.
\item\label{C:scalar} If $\cA\ne0$ and $\lambda\in\R$ satisfies
$\lambda I\ge0$, then $\lambda\ge0$, because the spectrum of $\lambda I$ is
$\{\lambda\}$.
\end{enumerate}
We also use the following consequences.
\begin{enumerate}[label=(C\arabic*),ref=(C\arabic*),leftmargin=*,resume]
\item\label{C:sumzero} If $x_1,\ldots,x_n\ge0$ and $\sum_ix_i=0$, then
$x_i=0$ for every $i$. Indeed, $0\le x_i\le\sum_jx_j=0$ by \ref{C:cone}, so
$x_i\ge0$ and $-x_i\ge0$.
\item\label{C:proj} Every projection $P$ satisfies $0\le P\le I$, because
$P=P^*P$ and $I-P=(I-P)^*(I-P)$. If $P_1,\ldots,P_n$ are projections with
$P_iP_j=0$ for $i\ne j$, then $\sum_iP_i$ is a projection: it is
self-adjoint, and $(\sum_iP_i)^2=\sum_{i,j}P_iP_j=\sum_iP_i$.
\item\label{C:pvm} If $P_1,\ldots,P_n$ are projections with $\sum_iP_i=I$,
then $P_iP_j=0$ for $i\ne j$. Indeed, for fixed $j$,
\[
 \sum_{i\ne j}(P_iP_j)^*(P_iP_j)=\sum_{i\ne j}P_jP_iP_j=P_j(I-P_j)P_j=0,
\]
and \ref{C:sumzero} and \ref{C:cstar} give $P_iP_j=0$.
\end{enumerate}

\begin{lemma}\label{lem:psd}
Let $X$ be a graph, let $k\ge1$ be an integer, and let $M$ be a real
positive-semidefinite matrix supported on $X$. If $X$ has a $k$-coloring in a nonzero unital
$C^*$-algebra, then
\begin{equation}\label{eq:psdbound}
 \su(M)\le k\tr M.
\end{equation}
\end{lemma}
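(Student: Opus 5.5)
The plan is to imitate the classical proof that $\thbar$ lower-bounds the chromatic number: represent $M$ as a Gram matrix, form operator-valued combinations of the coloring projections, and use positivity in $\cA$ together with \ref{C:scalar} to turn an operator inequality into a scalar one. Let $(P_{v,a})$ be a $k$-coloring of $X$ in the nonzero unital $C^*$-algebra $\cA$. Since $M$ is real positive-semidefinite, I will write $M_{uv}=\langle w_u,w_v\rangle$ for real vectors $w_u\in\R^m$, and denote by $w_{u,r}$ the $r$-th coordinate of $w_u$. Put $s=\sum_u w_u$. Then $\su(M)=\sum_r s_r^2$ and $\tr M=\sum_u\|w_u\|^2$.

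\emph{Step 1: the operators $y_{r,a}$.} For $r\in[m]$ and $a\in[k]$, define the self-adjoint element
\[
 y_{r,a}=\sum_u w_{u,r}P_{u,a}.
\]
I expand $\sum_r y_{r,a}^*y_{r,a}=\sum_{u,v}M_{uv}P_{u,a}P_{v,a}$ and use the support condition on $M$. Terms with $u\ne v$ nonadjacent vanish because $M_{uv}=0$. Terms with $u\sim v$ vanish by the third relation in \eqref{eq:relations}. Terms with $u=v$ give $M_{uu}P_{u,a}$, since $P_{u,a}^2=P_{u,a}$. Hence
\[
 \sum_r y_{r,a}^*y_{r,a}=\sum_u M_{uu}P_{u,a},
\]
and summing over $a$ with $\sum_aP_{u,a}=I$ gives
\[
 \sum_{a,r}y_{r,a}^*y_{r,a}=(\tr M)\,I.
\]

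\emph{Step 2: operator Cauchy--Schwarz.} Again by $\sum_aP_{u,a}=I$, we have
\[
 z_r:=\sum_a y_{r,a}=\sum_u w_{u,r}I=s_rI.
\]
For each $r$, expanding $\sum_{a,b}(y_{r,a}-y_{r,b})^*(y_{r,a}-y_{r,b})$ gives $2k\sum_ay_{r,a}^*y_{r,a}-2z_r^*z_r$. This is positive by \ref{C:cone}, so
\[
 s_r^2I=z_r^*z_r\le k\sum_ay_{r,a}^*y_{r,a}.
\]
Summing over $r$ and using Step~1 yields $\su(M)\,I\le k\tr M\, I$. Then \ref{C:scalar}, applied to $\lambda=k\tr M-\su(M)$ and using $\cA\ne0$, gives \eqref{eq:psdbound}.

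I expect no real obstacle. The one point needing care is Step~1, where the three kinds of vertex pairs must be checked against the support of $M$ and the coloring relations; this is precisely where the hypothesis that $M$ is supported on $X$ enters. The only other ingredient is the elementary identity behind Step~2, which replaces the scalar Cauchy--Schwarz inequality and uses only sums of elements of the form $x^*x$, so no trace or commutativity is needed.
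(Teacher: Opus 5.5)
Your proof is correct and is essentially the paper's argument: both factor $M=B^{\mathsf T}B$, write $(k\tr M-\su(M))I$, up to a positive factor, as a sum of elements $x^*x$ in $\cA$, and conclude with \ref{C:scalar}. The difference is only in packaging: the paper uses the centered elements $\sum_uB_{ru}(kP_{u,a}-I)=ky_{r,a}-s_rI$, and $\sum_a(ky_{r,a}-s_rI)^*(ky_{r,a}-s_rI)=k^2\sum_ay_{r,a}^*y_{r,a}-ks_r^2I$ is exactly $\tfrac k2$ times your pairwise-difference sum $\sum_{a,b}(y_{r,a}-y_{r,b})^*(y_{r,a}-y_{r,b})$.
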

\begin{proof}
Let $(P_{v,a})$ be such a coloring. Choose a real matrix $B$ with
$M=B^{\mathsf T}B$, and set $Y_{v,a}=kP_{v,a}-I$. Each $Y_{v,a}$ is
self-adjoint. For every color $a$,
\[
 \sum_{u,v}M_{uv}Y_{u,a}Y_{v,a}
 =\sum_r\Bigl(\sum_uB_{ru}Y_{u,a}\Bigr)^*
        \Bigl(\sum_vB_{rv}Y_{v,a}\Bigr)\ge0
\]
by \ref{C:cone}. Here the coefficients $B_{ru}$ are real, and the order of
the two operator factors is not changed. We have
$Y_{u,a}Y_{v,a}=k^2P_{u,a}P_{v,a}-kP_{u,a}-kP_{v,a}+I$. Since
$\sum_aP_{u,a}=\sum_aP_{v,a}=I$ and there are $k$ colors,
\[
 \sum_aY_{u,a}Y_{v,a}=k^2\sum_aP_{u,a}P_{v,a}-kI-kI+kI
 =k^2\sum_aP_{u,a}P_{v,a}-kI .
\]
Multiplying by $M_{uv}$ and summing over $u,v$ gives
\begin{align*}
 0&\le\sum_a\sum_{u,v}M_{uv}Y_{u,a}Y_{v,a}
   =k^2\sum_{u,v}M_{uv}\sum_aP_{u,a}P_{v,a}-k\su(M)I\\
  &=k\bigl(k\tr M-\su(M)\bigr)I.
\end{align*}
For the last equality, let $u\ne v$. Then $M_{uv}=0$, or $u\sim v$ and
$P_{u,a}P_{v,a}=0$ for every $a$. The terms with $u=v$ contribute
$\sum_vM_{vv}\sum_aP_{v,a}^2=\tr(M)I$. Since $k>0$, fact~\ref{C:scalar}
gives \eqref{eq:psdbound}.
\end{proof}

The proof uses only the relations \eqref{eq:relations} and positivity. It
uses neither a trace on $\cA$ nor a bound on the dimension.

\begin{corollary}\label{cor:blowup}
Let $F$ be a graph, let $t,k\ge1$ be integers, and let $M$ be a real
positive-semidefinite matrix supported on $F$. If $F[K_t]$ has a
$k$-coloring in a nonzero unital $C^*$-algebra, then
\begin{equation}\label{eq:uniform-bound}
 t\su(M)\le k\tr M.
\end{equation}
\end{corollary}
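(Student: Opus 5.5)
We apply Lemma~\ref{lem:psd} to the graph $X=F[K_t]$ with a suitable matrix built from $M$. Define the matrix $\tilde M$ indexed by $V(F)\times[t]$ by
\[
 \tilde M_{(u,i),(v,j)}=M_{uv}\quad\text{for all }u,v\in V(F),\ i,j\in[t],
\]
that is, $\tilde M=M\otimes J_t$, where $J_t$ is the all-ones $t\times t$ matrix. This matrix is real. It is positive semidefinite, because $J_t=\one\one^{\mathsf T}\succeq0$ and the Kronecker product of positive-semidefinite matrices is positive semidefinite. Concretely, if $M=B^{\mathsf T}B$, then $\tilde M=(B\otimes\one^{\mathsf T})^{\mathsf T}(B\otimes\one^{\mathsf T})$.

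Next we check that $\tilde M$ is supported on $F[K_t]$. Take two distinct, nonadjacent vertices $(u,i)\ne(v,j)$ of $F[K_t]$. If $u=v$, then $i\ne j$, so the two vertices are adjacent in $F[K_t]$; this case cannot occur. Hence $u\ne v$, and $u\not\sim v$ in $F$. Since $M$ is supported on $F$, we get $\tilde M_{(u,i),(v,j)}=M_{uv}=0$.

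Now we compute the two quantities in \eqref{eq:psdbound} for $\tilde M$:
\[
 \su(\tilde M)=\sum_{u,v}\sum_{i,j}M_{uv}=t^2\su(M),\qquad
 \tr\tilde M=\sum_{v}\sum_{i}M_{vv}=t\tr M.
\]
Suppose $F[K_t]$ has a $k$-coloring in a nonzero unital $C^*$-algebra. Then Lemma~\ref{lem:psd} gives $t^2\su(M)\le kt\tr M$. Dividing by $t\ge1$ yields \eqref{eq:uniform-bound}.

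The only step needing care is the support condition. It holds precisely because the $t$ copies of each vertex form a clique, so the entries of $\tilde M$ within a block $\{u\}\times[t]$ are permitted to be nonzero.
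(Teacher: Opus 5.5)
Your proposal is correct and follows the paper's proof: the paper also applies Lemma~\ref{lem:psd} to $M\otimes J_t$, checks that it is supported on $F[K_t]$ by the same case analysis, uses $\su(M\otimes J_t)=t^2\su(M)$ and $\tr(M\otimes J_t)=t\tr M$, and divides by $t$. The only cosmetic difference is how positive semidefiniteness is certified. The paper writes $M\otimes J_t=W^{\mathsf T}MW$ with $W_{u,(v,j)}=\delta_{u,v}$, whereas you use the factorization $(B\otimes\one^{\mathsf T})^{\mathsf T}(B\otimes\one^{\mathsf T})$, which is equivalent.
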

\begin{proof}
Let $J_t$ be the $t\times t$ all-ones matrix. The matrix $M\otimes J_t$,
with entries $(M\otimes J_t)_{(u,i),(v,j)}=M_{uv}$, is positive semidefinite,
because $M\otimes J_t=W^{\mathsf T}MW$ for the real matrix $W$ with rows
indexed by $V(F)$, columns indexed by $V(F)\times[t]$ and entries
$W_{u,(v,j)}=\delta_{u,v}$. It is supported on $F[K_t]$: if $(u,i)\ne(v,j)$ are not adjacent in
$F[K_t]$, then $u\ne v$ and $u\not\sim v$, so $M_{uv}=0$. Moreover
$\tr(M\otimes J_t)=t\tr M$ and $\su(M\otimes J_t)=t^2\su(M)$. Now apply
Lemma~\ref{lem:psd} to $F[K_t]$ and divide by $t$.
\end{proof}

\begin{remark}\label{rem:hoffman}
By Remark~\ref{rem:hypotheses}(ii), Lemma~\ref{lem:psd} states that
$\thbar(X)\le\cstar(X)$, and Corollary~\ref{cor:blowup} implies
$\cstar(F[K_t])\ge t\,\thbar(F)$. Let $X$ be a $d$-regular graph with
adjacency matrix $A$ and least eigenvalue $\lambda_{\min}<0$. Then
$M=A-\lambda_{\min}I$ is positive semidefinite and supported on $X$, and
$\su(M)/\tr M=1+d/|\lambda_{\min}|$. This number is Hoffman's lower bound
$1-\lambda_{\max}/\lambda_{\min}$ for the chromatic number of
$X$~\cite{Hoffman}, because the largest eigenvalue $\lambda_{\max}$ of $A$
equals $d$; we call it the \emph{Hoffman bound} of $X$. Hence
$\thbar(X)\ge1+d/|\lambda_{\min}|$ by Remark~\ref{rem:hypotheses}(ii), and
$\cstar(X)\ge1+d/|\lambda_{\min}|$ by Lemma~\ref{lem:psd}. This lower bound
is known for $\cq$ and for $\cstar$: combine
$\thbar(X)\ge1+d/|\lambda_{\min}|$ with $\thbar(X)\le\cq(X)$
\cite[Corollary~4.1]{MR} (see \cite{EW}), or with $\thbar(X)\le\cstar(X)$
\cite[Proposition~4.10]{OP}.
\end{remark}

\section{The base graph}\label{sec:base}

Identify $\F_2^{10}$ with the integers $0,\ldots,1023$ through binary
expansion, so that addition in $\F_2^{10}$ is bitwise exclusive-or, written
$\oplus$. Set
\begin{align*}
 S=\{&1,2,4,8,16,31,32,64,128,245,256,371,\\
     &431,457,512,619,697,711,805,861,915,1022\}.
\end{align*}
Let $F$ be the Cayley graph of $\F_2^{10}$ with connection set $S$: its
vertex set is $[1024]$, and $u\sim v$ if and only if $u\oplus v\in S$.
Since $|S|=22$ and $0\notin S$, the graph $F$ is simple and $22$-regular. Let
$A$ be its adjacency matrix, and let $I$ and $J$ be the identity matrix and
the all-ones matrix of order $1024$. Throughout this section
\begin{equation}\label{eq:fibers}
 p=1024,\qquad t=512,\qquad c=3t+2=1538,\qquad G=F[K_t],
\end{equation}
so that $p=2t$ and $|V(G)|=pt=524288$.

For $\ell\ge0$ and $x\in\F_2^{10}$ let $N_\ell(x)$ be the number of
$\ell$-tuples $(s_1,\ldots,s_\ell)\in S^\ell$ with
$s_1\oplus\cdots\oplus s_\ell=x$. Recall that $(A^\ell)_{uv}$ is the
number of walks $(u_0,\ldots,u_\ell)$ of length $\ell$ in $F$ with $u_0=u$
and $u_\ell=v$. For fixed $u$, the map
$(u_0,\ldots,u_\ell)\mapsto(u_0\oplus u_1,\ldots,u_{\ell-1}\oplus u_\ell)$ is
a bijection from the set of walks of length $\ell$ with $u_0=u$ onto
$S^\ell$. Its inverse maps $(s_1,\ldots,s_\ell)$ to the walk with
$u_r=u\oplus s_1\oplus\cdots\oplus s_r$ for $0\le r\le\ell$, which ends at
$u\oplus s_1\oplus\cdots\oplus s_\ell$. Hence
\begin{equation}\label{eq:walks}
 (A^\ell)_{uv}=N_\ell(u\oplus v)\qquad(u,v\in[1024],\ \ell\ge0).
\end{equation}

\begin{lemma}\label{lem:certificate}
The graph $F$ and its adjacency matrix $A$ have the following properties.
\begin{enumerate}[(a)]
\item $F$ has no closed walk of length three or five.
\item $(A+10I)(A+2I)(A-6I)=12J$.
\item $M=A+10I$ is positive semidefinite and supported on $F$, with
$\tr M=10p$ and $\su(M)=32p$.
\end{enumerate}
\end{lemma}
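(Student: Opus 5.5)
The plan is to reduce everything to finite computations with the walk-count functions $N_\ell$ of \eqref{eq:walks}, which can be checked exactly in integer arithmetic on the $1024$ elements of $\F_2^{10}$.

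For (a), by \eqref{eq:walks} the number of closed walks of length $\ell$ at any vertex is $(A^\ell)_{uu}=N_\ell(0)$. So it suffices to verify $N_3(0)=0$ and $N_5(0)=0$, that is, no sum of three or of five elements of $S$ (with repetition) equals $0$. By Remark~\ref{rem:hypotheses}(i), $N_5(0)=0$ alone suffices. I would compute $N_1=\one_S$, then $N_{\ell+1}(x)=\sum_{s\in S}N_\ell(x\oplus s)$, and read off the value at $0$. Equivalently, I would check that $S\oplus S$ is disjoint from $S\oplus S\oplus S$, which is a finite check of at most $22^2\cdot 22^3$ comparisons.

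For (b), all matrices involved are polynomials in $A$ or multiples of $J$, and they lie in the group algebra of $\F_2^{10}$: every such matrix has entries depending only on $u\oplus v$. Expanding $(A+10I)(A+2I)(A-6I)=A^3+6A^2-52A-120I$, identity (b) becomes
\[
N_3(x)+6N_2(x)-52N_1(x)-120\,\delta_{x,0}=12\qquad\text{for every }x\in\F_2^{10}.
\]
This is an exact integer check of $1024$ values. A cleaner cross-check uses characters. The eigenvalues of $A$ are $\lambda_\chi=\sum_{s\in S}\chi(s)$ for the $1024$ characters $\chi$ of $\F_2^{10}$. The trivial character gives $22$, and $(32)(24)(16)=12\cdot1024$ matches the eigenvalue $12p$ of $12J$ on $\one$. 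So (b) is equivalent to every nontrivial $\lambda_\chi$ lying in $\{-10,-2,6\}$. I expect this step to be the main obstacle, simply because it is the one genuinely computational fact, with nothing conceptual to exploit. I would state it as a certificate to be verified by machine, or in Lean as in \cite{zeiss-lean}.

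For (c), $M=A+10I$ is supported on $F$ because $A$ is, and the diagonal does not matter. We get $\tr M=10p$ since $A$ has zero diagonal, and $\su(M)=\su(A)+10p=22p+10p=32p$ since $F$ is $22$-regular. For positive semidefiniteness, (b) shows that the minimal polynomial of the symmetric matrix $A$ on $\one^\perp$ divides $(x+10)(x+2)(x-6)$. Indeed, $J$ vanishes on $\one^\perp$ and $A$ preserves $\one^\perp$ by regularity. So every eigenvalue of $A$ lies in $\{22,-10,-2,6\}$, all of which are $\ge-10$, and $M\succeq0$.

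Note that then $t\su(M)=32pt>c\cdot10p$ because $32\cdot512=16384>15380$. This confirms \eqref{eq:theta-hypothesis}, although that inequality is not part of the statement.
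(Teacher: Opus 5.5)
Your proposal is correct and follows the paper's proof essentially step by step: (a) reduces to the disjointness of $S^{(2)}$ and $S^{(3)}$, which is your $N_5(0)=0$ check. You handle length three through Remark~\ref{rem:hypotheses}(i), whereas the paper observes directly that $0=s\oplus s\in S^{(2)}$. Part (b) is the same pointwise identity $N_3+6N_2-52N_1-120N_0=12$, and (c) is the same eigenvector argument on $\one^\perp$. Your character reformulation of (b) is a valid equivalent cross-check but is not needed.
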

\begin{proof}
The proof uses two finite computations with the set $S$. Each of them
enumerates tuples in $S^\ell$ only for $\ell\le3$, that is, fewer than
$12000$ tuples. All three parts are formalized in Lean~\cite{zeiss-lean}. There
(b) has the equivalent form
$(M+4I)R+240M=12J$, with $M$ and $R$ as in Remark~\ref{rem:sos}, and
$M\succeq0$ is deduced from \eqref{eq:regular-sos}.

\emph{Computation 1.} The sets $S^{(2)}=\{s\oplus s':s,s'\in S\}$ and
$S^{(3)}=\{s\oplus s'\oplus s'':s,s',s''\in S\}$, which have $232$ and $792$
elements, are disjoint.

\emph{Computation 2.} For every $x\in\F_2^{10}$,
\begin{equation}\label{eq:pointwise}
 N_3(x)+6N_2(x)-52N_1(x)-120N_0(x)=12.
\end{equation}
Here $N_0$ is the indicator function of $\{0\}$ and $N_1$ is the indicator
function of $S$.

(a) By \eqref{eq:walks}, $F$ has a closed walk of length $\ell$ if and only
if $N_\ell(0)>0$. If $s_1\oplus s_2\oplus s_3=0$ with $s_i\in S$, then
$0=s_1\oplus s_1\in S^{(2)}$ and $0\in S^{(3)}$. If
$s_1\oplus\cdots\oplus s_5=0$, then
$s_1\oplus s_2=s_3\oplus s_4\oplus s_5\in S^{(2)}\cap S^{(3)}$. Both
contradict Computation~1.

(b) Expanding the product gives $A^3+6A^2-52A-120I$. By \eqref{eq:walks} its
$(u,v)$ entry is the left-hand side of \eqref{eq:pointwise} at
$x=u\oplus v$, which equals $12$.

(c) The matrix $A$ is real symmetric, and $A\one=22\,\one$ for the all-ones
vector $\one$, because $F$ is $22$-regular. Hence the orthogonal complement
$\one^\perp$ is invariant under $A$, and $\R^{1024}$ has an orthogonal basis
consisting of $\one$ and eigenvectors of $A$ in $\one^\perp$. Let
$\xi\in\one^\perp$ be an eigenvector of $A$ with eigenvalue $\lambda$. Then
$J\xi=0$, and (b) gives
$(\lambda+10)(\lambda+2)(\lambda-6)\xi=12J\xi=0$. Hence
$\lambda\in\{-10,-2,6\}$. Thus every eigenvalue of $A$ is at least $-10$, and
$M=A+10I\succeq0$. The matrix $M$ is supported on $F$ because its
off-diagonal entries are those of $A$. Finally $\tr M=10p$ because $A$ has
zero diagonal, and $\su(M)=22p+10p$ because every row of $A$ sums to $22$.
\end{proof}

\begin{corollary}\label{cor:Gbound}
If $G=F[K_{512}]$ has a $k$-coloring in a nonzero unital $C^*$-algebra,
then $k\ge512\cdot32/10=8192/5$. Since $k$ is an integer and $8192/5>1638$,
\begin{equation}\label{eq:Gbound}
 \cstar(G)\ge1639.
\end{equation}
\end{corollary}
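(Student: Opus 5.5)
The plan is to combine Lemma~\ref{lem:certificate}(c) with Corollary~\ref{cor:blowup}, after which only a little arithmetic remains. By Lemma~\ref{lem:certificate}(c), the matrix $M=A+10I$ is real, positive semidefinite and supported on $F$. Its trace and sum are $\tr M=10p$ and $\su(M)=32p$, where $p=1024$.

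Suppose $G=F[K_{512}]$ has a $k$-coloring in some nonzero unital $C^*$-algebra. Corollary~\ref{cor:blowup} with $t=512$ then gives
\[
 512\cdot32p=t\su(M)\le k\tr M=10pk.
\]
Since $p>0$, dividing by $10p$ yields $k\ge512\cdot32/10=16384/10=8192/5=1638.4$.

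Because $k$ is an integer, this forces $k\ge1639$. By the definition of $\cstar$ as the least $k$ admitting a coloring in some nonzero unital $C^*$-algebra, we get $\cstar(G)\ge1639$, which is \eqref{eq:Gbound}.

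There is no real obstacle here. All the content lies in the earlier results: the positive-semidefiniteness in Lemma~\ref{lem:certificate}(c), which rests on the eigenvalue identity in part~(b), and the blow-up argument of Corollary~\ref{cor:blowup}. The only points to check are that the trace is positive, so the division is valid, and that the rounding of $1638.4$ up to $1639$ is correct.
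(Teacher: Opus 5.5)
Your proposal is correct and is essentially the paper's own proof: apply Corollary~\ref{cor:blowup} with $t=512$ and the matrix $M=A+10I$ of Lemma~\ref{lem:certificate}(c) to get $512\cdot32p\le10pk$, then divide by $10p>0$ and round $8192/5=1638.4$ up to the integer $1639$. Your closing step from ``every admissible $k$ satisfies $k\ge1639$'' to $\cstar(G)\ge1639$ via the definition of $\cstar$ is also exactly right.
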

\begin{proof}
Apply Corollary~\ref{cor:blowup} with the matrix $M$ of
Lemma~\ref{lem:certificate}(c): $512\cdot32p\le10pk$.
\end{proof}

Only the inequality $\cstar(G)>1538$ is needed below. It holds because
$\su(M)/\tr M=16/5$ exceeds $c/t=3+2/t$. The same matrix gives a lower bound
for $\thbar(G)$: by the proof of Corollary~\ref{cor:blowup}, the matrix
$M\otimes J_{512}$ is positive semidefinite and supported on $G$, and
$\su(M\otimes J_{512})/\tr(M\otimes J_{512})=512\cdot32/10$. Hence, by
Remark~\ref{rem:hypotheses}(ii),
\begin{equation}\label{eq:thetaG}
 \thbar(G)\ge8192/5.
\end{equation}

The least eigenvalue of $A$ is $-10$. Otherwise the proof of
Lemma~\ref{lem:certificate}(c) would show that every eigenvector of $A$ in
$\one^\perp$ has eigenvalue $-2$ or $6$. Then $(A+2I)(A-6I)$ would vanish on
$\one^\perp$ and map $\one$ to $24\cdot16\cdot\one$, so it would equal
$\frac{24\cdot16}{1024}J=\frac38J$, which is impossible for an integer
matrix. Hence $16/5=1+22/10$ is the Hoffman bound of $F$
(Remark~\ref{rem:hoffman}).

\begin{remark}\label{rem:sos}
The identity in Lemma~\ref{lem:certificate}(b) also gives a certificate for
$M\succeq0$ that does not use eigenvectors. In terms of $M=A+10I$ it reads
$M(M-8I)(M-16I)=12J$, or equivalently $(M+4I)R+240M=12J$ with
$R=M^2-28M=A^2-8A-180I$. Since $MJ=32J$, multiplication by $M-32I$ gives
$M(M-8I)(M-16I)(M-32I)=0$. This is the identity
\begin{equation}\label{eq:regular-sos}
 R^2+112M^2=4096M,
\end{equation}
because both $R^2+112M^2-4096M$ and $M(M-8I)(M-16I)(M-32I)$ equal
$M^4-56M^3+896M^2-4096M$. As $R$ and $M$ are real symmetric,
$M=\frac{1}{4096}(R^{\mathsf T}R+112M^{\mathsf T}M)\succeq0$. The
formalization uses \eqref{eq:regular-sos}.
\end{remark}

\begin{remark}\label{rem:golay}
The following identification of $F$ is not used in the proofs. Let
$s_1,\ldots,s_{22}$ be the elements of $S$, let
$\sigma\colon\F_2^{22}\to\F_2^{10}$ be the linear map
$x\mapsto\sum_jx_js_j$, and let $C$ be the kernel of $\sigma$. Thus $C$ is
the binary linear code whose parity-check matrix has the elements of $S$ as
columns, and $\sigma$ is its syndrome map. The set $S$ spans $\F_2^{10}$, so
$\sigma$ is surjective, $\dim C=12$, and $\sigma$ induces a bijection
$\bar\sigma$ from the set of cosets of $C$ onto $\F_2^{10}$. The \emph{coset
graph} of $C$ has the cosets of $C$ as vertices; two cosets are adjacent if
and only if their difference, which is again a coset of $C$, contains a
vector of weight one. This holds if and only if their images under
$\bar\sigma$ differ by an element of $S$. Hence $\bar\sigma$ is an
isomorphism from the coset graph of $C$ onto $F$. A finite computation
shows that $C$ has minimum distance $6$, with exactly $77$ words of weight
$6$, and that there is a linear functional $\eta\colon C\to\F_2$ with
$\eta(w)=1$ for all $77$ words $w$ of weight $6$ such that
$C'=\{(w,\eta(w)):w\in C\}$ is a linear $[23,12,7]$ code. The code $C$ is
obtained from $C'$ by deleting the last coordinate. By the uniqueness of the
binary Golay code~\cite{Pless}, $C'$ is equivalent to the binary Golay code,
so $C$ is equivalent to a punctured binary Golay code, which is called the
truncated binary Golay code in \cite{BCN}. The binary Golay code is cyclic,
so its automorphism group is transitive on the coordinates, and all its
punctured codes are equivalent. Therefore $F$ is isomorphic to the coset
graph of the truncated binary Golay code \cite{Bailey,BCN}. It is
distance-regular with intersection array $\{22,21,20;1,2,6\}$, and the
spectrum of $A$ is $22^{1},6^{330},(-2)^{616},(-10)^{77}$; both statements
can also be verified directly from $S$. Since $F$ is connected and regular
with distinct eigenvalues $22$, $6$, $-2$ and $-10$, the identity in
Lemma~\ref{lem:certificate}(b) is the identity $h(A)=J$ of
Hoffman~\cite{HoffmanPoly} for the Hoffman polynomial
$h(x)=\frac1{12}(x+10)(x+2)(x-6)$ of $F$. The script
\texttt{numerics/check\_base\_graph.py} of the repository~\cite{zeiss-lean}
performs the finite computations of this remark.
\end{remark}

\section{The second factor}\label{sec:gadget}

Let $t\ge1$ be an integer, and let $p=2t$ and $c=3t+2$. Elements of $[c]$ are
called
\emph{labels}. Put
\[
 \Lambda=\{0,\ldots,p\},\qquad\Gamma=\{p+1,\ldots,c-1\},\qquad
 \Lambda_i=\Lambda\setminus\{i\}\quad(i\in[p]).
\]
Thus $|\Lambda_i|=2t$, $|\Gamma|=t+1$, and $[c]$ is the disjoint union of
$\Lambda$ and $\Gamma$. For $i\in[p]$ let $\rho_i\colon[p]\to\Lambda_i$ be
the bijection with $\rho_i(j)=j$ for $j\ne i$ and $\rho_i(i)=p$, and put
\[
 \cE_i=\{\rho_i(j):0\le j<t\},\qquad
 \cO_i=\{\rho_i(t+j):0\le j<t\}.
\]
The sets $\cE_i$ and $\cO_i$ have $t$ elements each and partition
$\Lambda_i$. In the proof that $H_t$ has no $c$-coloring in a nonzero unital
$C^*$-algebra (Proposition~\ref{prop:H}) the sets $\cO_i$ serve only as
index sets of size $t$. Their elements are used as colors in
the ordinary $(c+1)$-coloring of Proposition~\ref{prop:H}, and the maps
$\rho_i$ and the sets $\cE_i$, $\cO_i$ are used in
Section~\ref{sec:product}.

The graph $H_t$ has the following vertices, each with a \emph{list}
$L(v)\subseteq[c]$:
\begin{center}
\begin{tabular}{llll}
\toprule
Vertices & Name & Indices & List $L$\\
\midrule
$e_b$ & anchors & $b\in[c]$ & $\{b\}$\\
$f$ & center & & $[p]$\\
$\ell_{i,o}$ & & $i\in[p]$, $o\in\cO_i$ & $\Lambda_i$\\
$z_{i,b}$ & & $i\in[p]$, $b\in\Gamma$ & $\Lambda_i\cup\{b\}$\\
$h_{i,b}$ & & $i\in[p]$, $b\in\Gamma$ & $\{i,b\}$\\
\bottomrule
\end{tabular}
\end{center}
For $i\in[p]$ let $C_i$ be the set of the $2t+1$ vertices $\ell_{i,o}$
($o\in\cO_i$) and $z_{i,b}$ ($b\in\Gamma$). The edges of $H_t$ are:
\begin{enumerate}[label=(E\arabic*),ref=(E\arabic*),leftmargin=*]
\item\label{E:anchors} $e_be_{b'}$ for all $b\ne b'$ in $[c]$;
\item\label{E:lists} $ve_b$ for every vertex $v$ that is not an anchor and
every $b\in[c]\setminus L(v)$;
\item\label{E:clique} $yy'$ for all $y\ne y'$ in $C_i$, for every $i\in[p]$;
\item\label{E:fh} $fh_{i,b}$ for all $i\in[p]$ and $b\in\Gamma$;
\item\label{E:hz} $h_{i,b}z_{i,b}$ for all $i\in[p]$ and $b\in\Gamma$.
\end{enumerate}
\begin{figure}[htbp]
\centering
\begin{minipage}[t]{.43\textwidth}
\centering
\vspace{0pt}
\textbf{(a) \(F[K_3]\) at an edge \(uv\) of \(F\)}\par\smallskip
\begin{tikzpicture}[x=1.08cm,y=1.08cm,
  dot/.style={circle,fill=black,inner sep=1.45pt},
  every node/.style={font=\small},
  crossedge/.style={draw=blue!48!black,opacity=.50,line width=.45pt}]
  \path[use as bounding box] (-2.65,-1.58) rectangle (2.65,2.76);
  \coordinate (u) at (-1.60,2.25);
  \coordinate (v) at (1.60,2.25);
  \draw[line width=.65pt] (u)--(v);
  \node[dot,label=above:\(u\)] at (u) {};
  \node[dot,label=above:\(v\)] at (v) {};
  \draw[->,line width=.55pt] (-1.60,1.95)--(-1.60,1.16);
  \draw[->,line width=.55pt] (1.60,1.95)--(1.60,1.20);
  \coordinate (u1) at (-2.25,.03);
  \coordinate (u2) at (-1.55,.87);
  \coordinate (u3) at (-1.20,-.56);
  \coordinate (v1) at (1.12,.06);
  \coordinate (v2) at (1.80,.91);
  \coordinate (v3) at (2.38,-.43);
  \foreach \i in {1,2,3}
    \foreach \j in {1,2,3}
      \draw[crossedge] (u\i)--(v\j);
  \draw[line width=.8pt] (u1)--(u2)--(u3)--cycle;
  \draw[line width=.8pt] (v1)--(v2)--(v3)--cycle;
  \foreach \i in {1,2,3}{
    \node[dot] at (u\i) {};
    \node[dot] at (v\i) {};
  }
  \node at (-1.82,-.91) {\(K_3\)};
  \node at (1.80,-.91) {\(K_3\)};
  \node[font=\footnotesize,text=blue!48!black] at (0,-1.40)
    {complete join: all nine cross-edges};
\end{tikzpicture}
\end{minipage}\hfill
\begin{minipage}[t]{.55\textwidth}
\centering
\vspace{0pt}
\textbf{(b) Part of \(H_1\) for \(i=0\)}\par\medskip
\begin{tikzpicture}[x=.98cm,y=.90cm,
  dot/.style={circle,fill=black,inner sep=1.5pt},
  every node/.style={font=\small},
  listlabel/.style={font=\footnotesize,text=black!68}]
  \path[use as bounding box] (-3.20,-1.15) rectangle (3.20,4.25);
  \coordinate (f) at (0,3.60);
  \coordinate (h3) at (-1.35,2.34);
  \coordinate (h4) at (1.35,2.34);
  \coordinate (z3) at (-1.35,.56);
  \coordinate (z4) at (1.35,.56);
  \coordinate (ell) at (0,-.56);
  \fill[blue!6] (z3)--(z4)--(ell)--cycle;
  \draw[line width=.7pt] (f)--(h3) (f)--(h4);
  \draw[line width=.7pt] (h3)--(z3) (h4)--(z4);
  \draw[line width=.8pt] (z3)--(z4)--(ell)--cycle;
  \foreach \v in {f,h3,h4,z3,z4,ell}
    \node[dot] at (\v) {};
  \node[above=3pt] at (f) {\(f:\{0,1\}\)};
  \node[above left=2pt] at (h3) {\(h_{0,3}\)};
  \node[below left=2pt,listlabel] at (h3) {\(\{0,3\}\)};
  \node[above right=2pt] at (h4) {\(h_{0,4}\)};
  \node[below right=2pt,listlabel] at (h4) {\(\{0,4\}\)};
  \node[above left=2pt] at (z3) {\(z_{0,3}\)};
  \node[below left=2pt,listlabel] at (z3) {\(\{1,2,3\}\)};
  \node[above right=2pt] at (z4) {\(z_{0,4}\)};
  \node[below right=2pt,listlabel] at (z4) {\(\{1,2,4\}\)};
  \node[below=3pt] at (ell) {\(\ell_{0,1}:\{1,2\}\)};
  \node[font=\footnotesize] at (0,.19) {clique \(C_0\)};
  \node[draw=black!45,dashed,rounded corners=2pt,
    fill=white,align=center,inner sep=4pt,font=\footnotesize]
    at (0,1.51) {anchor clique\\\(K_5\)};
\end{tikzpicture}
\end{minipage}
\caption{The graphs \(F[K_t]\) and \(H_t\) for small parameters.
(a) In \(F[K_3]\), the two cliques that replace the ends of an edge \(uv\) of
\(F\) are completely joined.
(b) The vertices \(f\), \(h_{0,b}\), \(z_{0,b}\) and \(\ell_{0,1}\) of \(H_1\);
braces give their lists. The three vertices of \(C_0\) form a clique, and the
edges \(h_{0,b}z_{0,b}\) form a matching. Every displayed vertex is adjacent
to every anchor \(e_r\) with \(r\) not in its list; these edges and the
vertices with \(i=1\) are omitted.}
\label{fig:construction}
\end{figure}

Thus the anchors form a clique of order $c$, each $C_i$ is a clique of order
$2t+1$, and by \ref{E:anchors} and \ref{E:lists} every vertex $v$ of $H_t$
satisfies
\begin{equation}\label{eq:list-rule}
 L(v)=\{b\in[c]:v\text{ is not adjacent to }e_b\}.
\end{equation}
The number of vertices is
\begin{equation}\label{eq:Horder}
 |V(H_t)|=c+1+p\bigl(t+2(t+1)\bigr)=(p+1)c+1,
\end{equation}
which equals $1576451$ for $t=512$.
Figure~\ref{fig:construction} illustrates the graphs $F[K_t]$ and $H_t$.

An ordinary $c$-coloring of $H_t$ does not exist, by the following argument
of Zhu~\cite{Zhu}. Suppose that $\varphi$ is an ordinary $c$-coloring of
$H_t$. The $c$ anchors are pairwise adjacent, so they receive all $c$ colors,
and after a permutation of the colors we may assume that every anchor $e_b$
has color $b$. Then $\varphi(v)\in L(v)$ for every vertex $v$ by
\eqref{eq:list-rule}. The center has a color $i\in[p]$. For every
$b\in\Gamma$, the vertex $h_{i,b}$ is adjacent to $f$ and therefore has color
$b$, so $z_{i,b}$ does not have color $b$. Hence all $2t+1$ vertices of the
clique $C_i$ have colors in the set $\Lambda_i$ of size $2t$, a
contradiction.

In a $c$-coloring in a $C^*$-algebra, the anchors do not have fixed colors,
and the first step of this argument is not available. The next lemma
replaces it. The operators $D_{v,b}$ defined there play the role of the event
that $v$ receives the color of the anchor $e_b$. They are positive
contractions (Lemma~\ref{lem:effects}(c)). In general they are not
projections, and $\sum_bD_{v,b}$ need not equal $I$; see the example after
the proof of the lemma.

\begin{lemma}\label{lem:effects}
Let $X$ be a graph and let $c\ge1$ be an integer. Let $e_0,\ldots,e_{c-1}$ be
distinct, pairwise adjacent vertices of $X$. As for $H_t$, we call these
vertices \emph{anchors} and the elements of $[c]$ \emph{labels}, and we
define $L(v)$ for every vertex $v$ of $X$ by \eqref{eq:list-rule}. Let
$(P_{v,a})$ be a $c$-coloring of $X$ in a unital $C^*$-algebra $\cA$; thus
the number of colors equals the number of anchors. Here and in the proof of
Proposition~\ref{prop:H}, the letters $a,a'$ denote colors of the
$c$-coloring $(P_{v,a})$, that is, second indices of the projections
$P_{v,a}$; all other indices in $[c]$ are labels.
For $a,b\in[c]$, $\Theta\subseteq[c]$ and $v\in V(X)$ put
\[
 Q_{b,a}=P_{e_b,a},\qquad
 Q_{\Theta,a}=\sum_{b\in\Theta}Q_{b,a},\qquad
 D_{v,b}=\sum_{a\in[c]}Q_{b,a}P_{v,a}Q_{b,a}.
\]
\begin{enumerate}[(a)]
\item $\sum_aQ_{b,a}=I$ for every $b$, and $\sum_bQ_{b,a}=I$ for every $a$.
Moreover $Q_{b,a}Q_{b,a'}=0$ for $a\ne a'$, and $Q_{b,a}Q_{b',a}=0$ for
$b\ne b'$. Consequently every $Q_{\Theta,a}$ is a projection, and
$Q_{\Theta,a}Q_{\Theta',a}=Q_{\Theta\cap\Theta',a}$ for all
$\Theta,\Theta'\subseteq[c]$.
\item $P_{v,a}Q_{L(v),a}=Q_{L(v),a}P_{v,a}=P_{v,a}$ for every vertex $v$ and
every color $a$.
\item $D_{v,b}=\sum_{a\in[c]}(P_{v,a}Q_{b,a})^*(P_{v,a}Q_{b,a})$ and
$0\le D_{v,b}\le I$ for every vertex $v$ and every label $b$.
\item If $L(v)=\{b,b'\}$ with $b\ne b'$, then $D_{v,b}+D_{v,b'}=I$.
\item If $u\sim v$ and $L(u)\cap L(v)=\{b\}$, then $D_{v,b}D_{u,b}=0$.
\end{enumerate}
\end{lemma}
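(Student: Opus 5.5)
The plan is to prove the five parts in order. Each part uses only the relations \eqref{eq:relations}, the facts \ref{C:cone}--\ref{C:pvm}, and the earlier parts.

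For (a), the first identity is completeness at the anchor $e_b$. For the second identity, fix $a$. The projections $Q_{b,a}$, $b\in[c]$, are pairwise orthogonal because the anchors are pairwise adjacent, so by \ref{C:proj} their sum $Q_{[c],a}$ is a projection. Then I show $\sum_a Q_{[c],a}=\sum_b\sum_aQ_{b,a}=cI$. Each $I-Q_{[c],a}$ is a projection, hence positive, and $\sum_a(I-Q_{[c],a})=cI-cI=0$. So \ref{C:sumzero} forces $Q_{[c],a}=I$ for every $a$. The orthogonality relations are \ref{C:pvm} (same anchor) and the edge relation (distinct anchors). The statements about $Q_{\Theta,a}$ then follow by expanding the product and using $Q_{b,a}Q_{b',a}=\delta_{bb'}Q_{b,a}$.

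For (b), if $b\notin L(v)$ then $v\sim e_b$, so $P_{v,a}Q_{b,a}=0$, and taking adjoints gives $Q_{b,a}P_{v,a}=0$. Hence $P_{v,a}=P_{v,a}Q_{[c],a}=P_{v,a}Q_{L(v),a}$, and symmetrically on the other side.

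For (c), I write $Q_{b,a}P_{v,a}Q_{b,a}=(P_{v,a}Q_{b,a})^*(P_{v,a}Q_{b,a})$, so $D_{v,b}\ge0$ by \ref{C:cone}. For the upper bound, $P_{v,a}\le I$ and \ref{C:conj} give $D_{v,b}\le\sum_aQ_{b,a}^2=I$.

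For (d), by (b) and (a) we have $Q_{b,a}P_{v,a}Q_{b',a}=Q_{b,a}Q_{L(v),a}P_{v,a}Q_{L(v),a}Q_{b',a}$. The task is to expand $\sum_aQ_{L(v),a}P_{v,a}Q_{L(v),a}=\sum_aP_{v,a}=I$ with $Q_{L(v),a}=Q_{b,a}+Q_{b',a}$. This gives $D_{v,b}+D_{v,b'}$ plus the cross terms $\sum_a(Q_{b,a}P_{v,a}Q_{b',a}+\text{h.c.})$. Killing these cross terms is the main obstacle. One idea is that $P_{v,a}\le Q_{L(v),a}$ alone does not force $P_{v,a}$ to commute with $Q_{b,a}$. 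What I would try is the operator $T=\sum_aQ_{b,a}P_{v,a}Q_{b',a}$, which should vanish. Write $W_a=\sum_{a'}Q_{b,a'}P_{v,a}$, use the column sums $\sum_aQ_{b,a}=I$, and compare $\sum_aQ_{b,a}P_{v,a}$ with $Q_{b,\cdot}$ summed against the partition $\sum_aP_{v,a}=I$. Concretely, $\sum_aP_{v,a}Q_{b,a}$ and $\sum_aP_{v,a}Q_{b',a}$ add up to $I$. I would compute $X=\sum_aP_{v,a}Q_{b,a}$ and show it is a projection: the cross products $P_{v,a}Q_{b,a}P_{v,a'}Q_{b,a'}$ for $a\ne a'$ should vanish via $Q_{b,a}P_{v,a'}=Q_{b,a}Q_{L(v),a'}P_{v,a'}$ combined with orthogonality of $Q_{b,a}$ and $Q_{b,a'}$ and of $Q_{b,a}$ and $Q_{b',a}$. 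Expect some care here. Once $X$ and $I-X$ are shown to be complementary idempotents, $X^*X$ and $(I-X)^*(I-X)$ will yield $D_{v,b}$ and $D_{v,b'}$, giving $D_{v,b}+D_{v,b'}=I$.

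For (e), I use (b) for both $u$ and $v$. In a product $Q_{b,a}P_{v,a}Q_{b,a}\,Q_{b,a'}P_{u,a'}Q_{b,a'}$ only $a=a'$ survives, because $Q_{b,a}Q_{b,a'}=0$. The surviving term contains $P_{v,a}Q_{b,a}P_{u,a}$. I insert $Q_{L(v),a}$ and $Q_{L(u),a}$ to get $Q_{L(v),a}Q_{b,a}Q_{L(u),a}=Q_{b,a}$, and then try to relate this to $P_{v,a}P_{u,a}=0$. The cleanest route is to rewrite $P_{v,a}Q_{b,a}P_{u,a}=P_{v,a}(Q_{L(v)\cap L(u),a})P_{u,a}$ and argue that $P_{v,a}Q_{L(v),a}Q_{L(u),a}P_{u,a}=P_{v,a}P_{u,a}=0$ by (b). Since $Q_{L(v),a}Q_{L(u),a}=Q_{\{b\},a}$ by (a), this gives $P_{v,a}Q_{b,a}P_{u,a}=0$, and each term vanishes.
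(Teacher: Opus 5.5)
Your arguments for (a), (b), (c) and (e) are essentially those of the paper. Part (d), however, has a genuine gap: both intermediate claims in your plan are false in general. The operator $T=\sum_aQ_{b,a}P_{v,a}Q_{b',a}$ need not vanish. Only $T+T^*=0$ holds, and that is just a restatement of (d), since $I=\sum_aQ_{L(v),a}P_{v,a}Q_{L(v),a}=D_{v,b}+D_{v,b'}+T+T^*$. Likewise, $X=\sum_aP_{v,a}Q_{b,a}$ need not be a projection, or even idempotent.

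Your reason for the vanishing of $P_{v,a}Q_{b,a}P_{v,a'}Q_{b,a'}$ ($a\ne a'$) breaks down once $Q_{b,a}Q_{L(v),a'}=Q_{b,a}Q_{b',a'}$ appears. Entries of a magic unitary in different rows \emph{and} different columns need not be orthogonal. Concretely, take $c=2$ and the graph consisting of two adjacent anchors $e_0,e_1$ and an isolated vertex $v$, so $L(v)=\{0,1\}$. In $M_2(\C)$ let $\Pi,\Pi'$ be the orthogonal projections onto the lines spanned by $(1,0)$ and $(1,1)$. Put $Q_{0,0}=Q_{1,1}=\Pi$, $Q_{0,1}=Q_{1,0}=I-\Pi$, $P_{v,0}=\Pi'$, $P_{v,1}=I-\Pi'$, and $b=0$, $b'=1$. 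Then $Q_{0,0}Q_{1,1}=\Pi\ne0$ and
\[
 X=\tfrac12\begin{pmatrix}1&-1\\1&1\end{pmatrix},\qquad
 X^2=\tfrac12\begin{pmatrix}0&-1\\1&0\end{pmatrix}\ne X,\qquad
 T=\tfrac12\begin{pmatrix}0&1\\-1&0\end{pmatrix}\ne0,
\]
while $D_{v,0}=D_{v,1}=\frac12I$ do add up to $I$. Also, idempotence alone would not suffice at the end; you would need $X=X^*$.

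Your setup can be repaired, and the repair is the paper's proof. Without any projection property, $X^*X=D_{v,b}$ because $P_{v,a}P_{v,a'}=\delta_{a,a'}P_{v,a}$. By (b), $\sum_aQ_{L(v),a}P_{v,a}=I$, so $I-X^*=\sum_aQ_{b',a}P_{v,a}$ and hence $(I-X)^*(I-X)=D_{v,b'}$.

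What is missing is a second expression for $I-X^*$. For this you need the complementary measurement $P'_a=Q_{L(v),a}-P_{v,a}$:
\begin{itemize}
\item By (a) and (b), each $P'_a$ is a projection, and $\sum_aP'_a=2I-I=I$.
\item Hence the $P'_a$ are pairwise orthogonal by \ref{C:pvm}.
\item Since $Q_{b,a}Q_{b',a}=0$, we get $I-X^*=\sum_aQ_{b,a}P'_a$.
\item Therefore $(I-X)^*(I-X)=\sum_aQ_{b,a}P'_aQ_{b,a}=\sum_a(Q_{b,a}-Q_{b,a}P_{v,a}Q_{b,a})=I-D_{v,b}$.
\end{itemize}
Comparing the two expressions for $(I-X)^*(I-X)$ gives (d); the paper writes $Z=I-X^*$. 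In terms of your operator, (d) says $X+X^*=2X^*X$, that is, $2X-I$ is an isometry. This is strictly weaker than $X$ being a projection.
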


Part (a) states that $(Q_{b,a})_{b,a\in[c]}$ is a magic unitary, also called
a quantum permutation matrix.

\begin{proof}
(a) The relations for fixed $b$ are part of \eqref{eq:relations} for the
vertex $e_b$. Let $a\in[c]$. For $b\ne b'$ the anchors $e_b,e_{b'}$ are
adjacent, so $Q_{b,a}Q_{b',a}=0$. By \ref{C:proj}, $U_a=\sum_bQ_{b,a}$ is a
projection and $I-U_a\ge0$. Since the number of anchors equals the number of
colors,
\[
 \sum_{a\in[c]}(I-U_a)=cI-\sum_{b\in[c]}\sum_{a\in[c]}Q_{b,a}=cI-cI=0.
\]
By \ref{C:sumzero}, $U_a=I$ for every $a$. For fixed $a$ the projections
$Q_{b,a}$, $b\in[c]$, are pairwise orthogonal. Hence $Q_{\Theta,a}$ is a
projection by \ref{C:proj}, and
$Q_{\Theta,a}Q_{\Theta',a}
=\sum_{b\in\Theta}\sum_{b'\in\Theta'}Q_{b,a}Q_{b',a}
=Q_{\Theta\cap\Theta',a}$.

(b) If $b\notin L(v)$, then $v\sim e_b$ by \eqref{eq:list-rule}, so
$P_{v,a}Q_{b,a}=0$ and $Q_{b,a}P_{v,a}=0$. With (a) we get
$P_{v,a}=P_{v,a}\sum_bQ_{b,a}=P_{v,a}Q_{L(v),a}$, and in the same way
$P_{v,a}=Q_{L(v),a}P_{v,a}$.

(c) Since $P_{v,a}=P_{v,a}^*P_{v,a}$ and $Q_{b,a}$ is self-adjoint,
\[
 Q_{b,a}P_{v,a}Q_{b,a}=(P_{v,a}Q_{b,a})^*(P_{v,a}Q_{b,a}),
\]
and summing over $a$ gives the first assertion. By \ref{C:proj} and
\ref{C:conj},
$0\le Q_{b,a}P_{v,a}Q_{b,a}\le Q_{b,a}$. Summing over $a$ and using (a) gives
$0\le D_{v,b}\le I$.

(d) Abbreviate $P_a=P_{v,a}$, $Q_a=Q_{b,a}$, $Q'_a=Q_{b',a}$ and
$\bar Q_a=Q_a+Q'_a=Q_{L(v),a}$. By (a), $\bar Q_a$ is a projection,
$Q_a\bar Q_a=Q_a$, and $Q_aQ'_a=0$. By (b),
$\bar Q_aP_a=P_a\bar Q_a=P_a$. Hence $P'_a=\bar Q_a-P_a$ is self-adjoint
with
\[
 (P'_a)^2=\bar Q_a^2-\bar Q_aP_a-P_a\bar Q_a+P_a^2=\bar Q_a-P_a=P'_a ,
\]
so $P'_a$ is a projection. Moreover
$\sum_aP'_a=\sum_aQ_a+\sum_aQ'_a-\sum_aP_a=I$ by (a), and therefore
$P'_aP'_{a'}=0$ for $a\ne a'$ by \ref{C:pvm}. Put $Z=\sum_aQ'_aP_a$. Since
$\sum_a\bar Q_aP_a=\sum_aP_a=I$ and $Q_aP'_a=Q_a-Q_aP_a$, we have
\[
 Z=I-\sum_aQ_aP_a=\sum_aQ_aP'_a .
\]
The first expression for $Z$ and $P_aP_{a'}=\delta_{a,a'}P_a$ give
\[
 ZZ^*=\sum_{a,a'}Q'_aP_aP_{a'}Q'_{a'}=\sum_aQ'_aP_aQ'_a=D_{v,b'}.
\]
The last expression for $Z$ and $P'_aP'_{a'}=\delta_{a,a'}P'_a$ give
\[
 ZZ^*=\sum_{a,a'}Q_aP'_aP'_{a'}Q_{a'}=\sum_aQ_aP'_aQ_a
     =\sum_a(Q_a-Q_aP_aQ_a)=I-D_{v,b}.
\]
Hence $D_{v,b}+D_{v,b'}=I$.

(e) By (a), $Q_{L(v),a}Q_{L(u),a}=Q_{\{b\},a}=Q_{b,a}$. With (b) and
$u\sim v$,
\[
 P_{v,a}Q_{b,a}P_{u,a}
 =P_{v,a}Q_{L(v),a}Q_{L(u),a}P_{u,a}=P_{v,a}P_{u,a}=0 .
\]
Since $Q_{b,a}Q_{b,a'}=\delta_{a,a'}Q_{b,a}$,
\[
 D_{v,b}D_{u,b}
 =\sum_{a,a'}Q_{b,a}P_{v,a}Q_{b,a}Q_{b,a'}P_{u,a'}Q_{b,a'}
 =\sum_aQ_{b,a}\bigl(P_{v,a}Q_{b,a}P_{u,a}\bigr)Q_{b,a}=0.\qedhere
\]
\end{proof}

The operators $D_{v,b}$ need not be projections, and $\sum_bD_{v,b}$ need
not equal $I$. For example, let $c=3$, and let $X$ consist of three pairwise
adjacent vertices $e_0,e_1,e_2$ and a vertex $v$ without neighbors, so that
$L(v)=[3]$. Let $\cA=M_2(\C)$, and let $\Pi$ and $\Pi'$ be the orthogonal
projections onto the lines spanned by $(1,0)$ and $(1,1)$. The families
\begin{align*}
 (P_{e_0,a})_{a\in[3]}&=(\Pi,I-\Pi,0),&
 (P_{e_1,a})_{a\in[3]}&=(I-\Pi,\Pi,0),\\
 (P_{e_2,a})_{a\in[3]}&=(0,0,I),&
 (P_{v,a})_{a\in[3]}&=(I-\Pi',0,\Pi')
\end{align*}
form a $3$-coloring of $X$ in $\cA$. Since $\Pi(I-\Pi')\Pi=\frac12\Pi$ and
$(I-\Pi)(I-\Pi')(I-\Pi)=\frac12(I-\Pi)$, we get $D_{v,0}=\frac12\Pi$,
$D_{v,1}=\frac12(I-\Pi)$ and $D_{v,2}=\Pi'$. Thus $D_{v,0}$ is not a
projection, and $\sum_bD_{v,b}=\frac12I+\Pi'\ne I$.

\begin{proposition}\label{prop:H}
Let $t\ge1$ be an integer and $c=3t+2$. The graph $H_t$ has no $c$-coloring in
any nonzero
unital $C^*$-algebra, and $\chi(H_t)\le c+1$. Consequently
$\cstar(H_t)=\cq(H_t)=\chi(H_t)=c+1$.
\end{proposition}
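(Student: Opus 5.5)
The plan has two halves: an explicit ordinary $(c+1)$-coloring, and an operator version of Zhu's argument built on Lemma~\ref{lem:effects}.

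\emph{Upper bound.} I would color the anchor $e_b$ with $b$ and the center $f$ with the new color $c$. I would color $h_{i,b}$ with $i$, $z_{i,b}$ with $b$, and $\ell_{i,o}$ with $o\in\cO_i$. Every non-anchor vertex then gets a label from its own list, except $f$, which gets the new color $c$, and that color no anchor carries. So edges \ref{E:anchors} and \ref{E:lists} are proper by \eqref{eq:list-rule}. For \ref{E:clique}, the colors on $C_i$ are the $t$ distinct elements of $\cO_i\subseteq\Lambda$ and the $t+1$ distinct elements of $\Gamma$, which are disjoint. Edge \ref{E:fh} joins colors $c$ and $i$. Edge \ref{E:hz} joins $i\in[p]$ and $b\in\Gamma$. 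Hence $\chi(H_t)\le c+1$, and the chain of equalities follows from this together with \eqref{eq:hierarchy} and the nonexistence statement.

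\emph{Nonexistence.} Let $(P_{v,a})$ be a $c$-coloring in $\cA\ne0$, and apply Lemma~\ref{lem:effects} with the anchors $e_b$. Fix $i\in[p]$ and put $\Phi=D_{f,i}$. For $b\in\Gamma$ we have $L(h_{i,b})=\{i,b\}$, so part~(d) gives $D_{h_{i,b},i}+D_{h_{i,b},b}=I$. Since $f\sim h_{i,b}$ and $L(f)\cap L(h_{i,b})=\{i\}$, part~(e) gives $D_{h_{i,b},i}\Phi=0$, hence $D_{h_{i,b},b}\Phi=\Phi$. Next, $z_{i,b}\sim h_{i,b}$ and $L(z_{i,b})\cap L(h_{i,b})=\{b\}$ because $i\notin\Lambda_i$. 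So part~(e) gives $D_{z_{i,b},b}D_{h_{i,b},b}=0$, and therefore $D_{z_{i,b},b}\Phi=0$; taking adjoints, also $\Phi D_{z_{i,b},b}=0$.

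For the counting step, the upper half is straightforward. Since $C_i$ is a clique, the projections $P_{y,a}$ with $y\in C_i$ are pairwise orthogonal for fixed $a$, so $\sum_{y\in C_i}P_{y,a}\le I$ by \ref{C:proj}. Conjugating by $Q_{b,a}$ (\ref{C:conj}) and summing over $a$ gives $\sum_{y\in C_i}D_{y,b}\le I$ for each label $b$. Summing over $b\in\Lambda_i$ and conjugating by $\Phi$ yields
\[
\sum_{y\in C_i}\sum_{b\in\Lambda_i}\Phi D_{y,b}\Phi\le 2t\,\Phi^2 .
\]
The lower half is the step I expect to be the main obstacle. I want $\sum_{b\in\Lambda_i}\Phi D_{y,b}\Phi\ge\Phi^2$ for each of the $2t+1$ vertices $y\in C_i$, which would give $-\Phi^2\ge0$ and hence $\Phi^2=0$, so $\Phi=0$ by \ref{C:cone} and \ref{C:cstar}. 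The difficulty is that $\sum_{b\in L(y)}D_{y,b}$ need not equal $I$, as the example after Lemma~\ref{lem:effects} shows. I would first try to exploit that $\Phi$ itself is built from the magic unitary: $\Phi=\sum_a Q_{i,a}P_{f,a}Q_{i,a}$. The aim would be to show that, once sandwiched by $\Phi$, the sum $\sum_{b\in L(y)}D_{y,b}$ behaves like $I$, using $P_{y,a}=Q_{L(y),a}P_{y,a}Q_{L(y),a}$ from part~(b) and the orthogonality relations in part~(a). For $y=z_{i,b}$ the extra label $b$ contributes nothing, since $\Phi D_{z_{i,b},b}\Phi=0$; this leaves only $\Lambda_i$ and matches the form of the inequality above. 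If the direct sandwich fails, a fallback is the companion operators $E_{y,b}=\sum_aP_{y,a}Q_{b,a}P_{y,a}$, which satisfy $\sum_{b\in L(y)}E_{y,b}=I$ exactly by parts~(a) and~(b). One would then need to transfer the vanishing relations from the $D$'s to the $E$'s.

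\emph{Conclusion.} This gives $D_{f,i}=0$ for all $i\in[p]$. For $b\in\Gamma$ we have $f\sim e_b$, so $P_{f,a}Q_{b,a}=0$ and $D_{f,b}=0$ directly. Thus $D_{f,b}=0$ for every label $b$. By part~(c) and \ref{C:sumzero}, \ref{C:cstar}, this gives $P_{f,a}Q_{b,a}=0$ for all $a,b$. Summing over $b$ with part~(a) yields $P_{f,a}=0$ for every $a$, which contradicts $\sum_aP_{f,a}=I\ne0$.
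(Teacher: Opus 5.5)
Your $(c+1)$-coloring, the derivation of $D_{h_{i,b},b}\Phi=\Phi$ and $D_{z_{i,b},b}\Phi=0$, the upper half of the count, and the final contradiction are correct and agree with the paper. The gap is the lower half. You never prove $\sum_{b\in\Lambda_i}\Phi D_{y,b}\Phi\ge\Phi^2$ for $y\in C_i$, and your sketch for $y=\ell_{i,o}$ invokes only relations among $f$, $y$ and the anchors. Those relations do not imply it.

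Here is a counterexample to that local claim. Take $t=2$, $i=0$ and $y=\ell_{0,2}$, so $L(y)=\Lambda_0=\{1,2,3,4\}$. Shift the labels and colors $0,1,2$ of the example after Lemma~\ref{lem:effects} to $1,2,3$. Give every other anchor $e_b$ the color $b$, that is, $P_{e_b,a}=\delta_{a,b}I$, and put $P_{f,0}=I$. This is a coloring in $M_2(\C)$ of the subgraph induced by $f$, $y$ and the anchors. In it $\Phi=I$, yet $\sum_{b\in\Lambda_0}D_{y,b}=\frac12I+\Pi'\not\ge I$.

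The information is lost in your upper half. Conjugating $\sum_{y\in C_i}P_{y,a}\le I$ separately by each $Q_{b,a}$ replaces the compression $Q_{\Lambda_i,a}P_{y,a}Q_{\Lambda_i,a}$ by its pinching $\sum_{b\in\Lambda_i}Q_{b,a}P_{y,a}Q_{b,a}$. The pinching drops the cross terms with $b\ne b'$, and there is no order relation between the two. Your fallback via $E_{y,b}$ is not worked out, and there is no evident bound $\sum_{y\in C_i}\sum_{b\in\Lambda_i}\Phi E_{y,b}\Phi\le2t\Phi^2$.

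The missing idea is to extract an operator identity from $\Phi D_{z_{i,b},b}\Phi=0$ instead of using it only inside a sum.

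\begin{itemize}
\item By Lemma~\ref{lem:effects}(c), this equation says $\sum_a(P_{z_{i,b},a}Q_{b,a}\Phi)^*(P_{z_{i,b},a}Q_{b,a}\Phi)=0$. Then \ref{C:sumzero} and \ref{C:cstar} give $P_{z_{i,b},a}Q_{b,a}\Phi=0$.
\item With Lemma~\ref{lem:effects}(b), this yields $P_{y,a}\Phi=P_{y,a}Q_{\Lambda_i,a}\Phi$ for all $y\in C_i$. Hence $\Phi P_{y,a}\Phi=\Phi Q_{\Lambda_i,a}P_{y,a}Q_{\Lambda_i,a}\Phi$.
\item Sum over $a$ and $y\in C_i$. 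The left side is exactly $(2t+1)\Phi^2$.
\item Conjugate $\sum_{y\in C_i}P_{y,a}\le I$ by the single element $Q_{\Lambda_i,a}\Phi$, never splitting $Q_{\Lambda_i,a}$ into its summands. This bounds the right side by $\sum_a\Phi Q_{\Lambda_i,a}\Phi=2t\Phi^2$.
\end{itemize}

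Then $-\Phi^2\ge0$ forces $\Phi=0$, and your conclusion goes through.
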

\begin{proof}
Suppose that $(P_{v,a})$ is a $c$-coloring of $H_t$ in a nonzero unital
$C^*$-algebra. We use Lemma~\ref{lem:effects} with the anchors
$e_0,\ldots,e_{c-1}$; by \eqref{eq:list-rule} the sets $L(v)$ of the lemma
are the lists of $H_t$. We show that $D_{f,r}=0$ for every label $r\in[c]$
and derive a contradiction.

\emph{Labels in $[p]$.} Fix $i\in[p]$ and put $\Phi=D_{f,i}$. Then
$\Phi=\Phi^*$ and $\Phi\ge0$ by Lemma~\ref{lem:effects}(c). Let
$b\in\Gamma$. Since $b\notin[p]$, we have
$L(f)\cap L(h_{i,b})=[p]\cap\{i,b\}=\{i\}$, and $f\sim h_{i,b}$ by
\ref{E:fh}. Lemma~\ref{lem:effects}(e) gives $D_{h_{i,b},i}\Phi=0$, and
Lemma~\ref{lem:effects}(d) gives
\[
 D_{h_{i,b},b}\Phi=(I-D_{h_{i,b},i})\Phi=\Phi .
\]
Since $i\notin\Lambda_i\cup\{b\}$, we have
$L(z_{i,b})\cap L(h_{i,b})=\{b\}$, and $h_{i,b}\sim z_{i,b}$ by \ref{E:hz}.
Lemma~\ref{lem:effects}(e) gives $D_{z_{i,b},b}D_{h_{i,b},b}=0$, and
therefore
\[
 D_{z_{i,b},b}\Phi=D_{z_{i,b},b}D_{h_{i,b},b}\Phi=0 .
\]
Consequently, by Lemma~\ref{lem:effects}(c) and $\Phi=\Phi^*$,
\[
 0=\Phi D_{z_{i,b},b}\Phi
  =\sum_a\bigl(P_{z_{i,b},a}Q_{b,a}\Phi\bigr)^*
         \bigl(P_{z_{i,b},a}Q_{b,a}\Phi\bigr),
\]
and \ref{C:sumzero} and \ref{C:cstar} give
\begin{equation}\label{eq:z-kill}
 P_{z_{i,b},a}Q_{b,a}\Phi=0\qquad(a\in[c],\ b\in\Gamma).
\end{equation}
We claim that
\begin{equation}\label{eq:restricted}
 P_{v,a}\Phi=P_{v,a}Q_{\Lambda_i,a}\Phi\qquad(v\in C_i,\ a\in[c]).
\end{equation}
For $v=\ell_{i,o}$ this is Lemma~\ref{lem:effects}(b), because
$L(v)=\Lambda_i$. For $v=z_{i,b}$ we have $L(v)=\Lambda_i\cup\{b\}$ with
$b\notin\Lambda_i$, so Lemma~\ref{lem:effects}(b) gives
$P_{v,a}=P_{v,a}Q_{\Lambda_i,a}+P_{v,a}Q_{b,a}$, and \eqref{eq:z-kill}
removes the last term after multiplication by $\Phi$. From
\eqref{eq:restricted} and $P_{v,a}^2=P_{v,a}$ we obtain
\begin{equation}\label{eq:compressed}
 \Phi P_{v,a}\Phi=(P_{v,a}\Phi)^*(P_{v,a}\Phi)
  =\Phi Q_{\Lambda_i,a}P_{v,a}Q_{\Lambda_i,a}\Phi
  \qquad(v\in C_i,\ a\in[c]).
\end{equation}
No commutation relation between $\Phi$ and the other operators is used.

Fix $a$. Since $C_i$ is a clique, the projections $P_{v,a}$, $v\in C_i$, are
pairwise orthogonal, so $\sum_{v\in C_i}P_{v,a}\le I$ by \ref{C:proj}. By
Lemma~\ref{lem:effects}(a), $Q_{\Lambda_i,a}$ is a projection, and
$\sum_aQ_{\Lambda_i,a}=\sum_{r\in\Lambda_i}\sum_aQ_{r,a}=2tI$. In the
following computation the two equalities in the first line use
$\sum_aP_{v,a}=I$ and \eqref{eq:compressed}. The inequality is \ref{C:conj},
applied to $\sum_{v\in C_i}P_{v,a}\le I$ and the element
$Q_{\Lambda_i,a}\Phi$, together with $Q_{\Lambda_i,a}^2=Q_{\Lambda_i,a}$. We
get
\begin{align*}
 (2t+1)\Phi^2
 &=\sum_{v\in C_i}\sum_a\Phi P_{v,a}\Phi
  =\sum_a\Phi Q_{\Lambda_i,a}
     \Bigl(\sum_{v\in C_i}P_{v,a}\Bigr)Q_{\Lambda_i,a}\Phi\\
 &\le\sum_a\Phi Q_{\Lambda_i,a}\Phi=2t\Phi^2 .
\end{align*}
Hence $-\Phi^2\ge0$. Also $\Phi^2=\Phi^*\Phi\ge0$, so $\Phi^2=0$ by
\ref{C:cone} and $\Phi=0$ by \ref{C:cstar}. Thus $D_{f,i}=0$ for every
$i\in[p]$.

\emph{Labels outside $[p]$.} Let $r\in[c]\setminus[p]$. Then
$r\notin L(f)$, so $f\sim e_r$ by \eqref{eq:list-rule}, and
$P_{f,a}Q_{r,a}=0$ for every $a$. Hence $D_{f,r}=0$.

\emph{Contradiction.} For all $r\in[c]$ we have
$0=D_{f,r}=\sum_a(P_{f,a}Q_{r,a})^*(P_{f,a}Q_{r,a})$ by
Lemma~\ref{lem:effects}(c), so
$P_{f,a}Q_{r,a}=0$ for all $a,r$ by \ref{C:sumzero} and \ref{C:cstar}. With
Lemma~\ref{lem:effects}(a),
\[
 P_{f,a}=P_{f,a}\sum_{r\in[c]}Q_{r,a}=0\qquad(a\in[c]),
\]
which contradicts $\sum_aP_{f,a}=I\ne0$.

\emph{An ordinary $(c+1)$-coloring.} We use the labels and one new color $c$
as colors. Give every anchor $e_r$ the color $r$, $f$ the color $c$,
$h_{i,b}$ the color $i$, $z_{i,b}$ the color $b$, and $\ell_{i,o}$ the color
$o$. Every vertex other than $f$ has a color in its list, and the color of
$f$ is used only once; this settles \ref{E:anchors}, \ref{E:lists} and
\ref{E:fh}. For \ref{E:hz} note that $i\ne b$. For \ref{E:clique}, the colors
$o\in\cO_i$ of the vertices $\ell_{i,o}$ and the colors $b\in\Gamma$ of the
vertices $z_{i,b}$ are pairwise different, because
$\cO_i\subseteq\Lambda$ is disjoint from $\Gamma$.

For the last assertion, recall from Section~\ref{sec:intro} that a
$k$-coloring with
$k\le c$ extends to a $c$-coloring in the same algebra. Hence $H_t$ has no
$k$-coloring in a nonzero unital $C^*$-algebra for any $k\le c$, that is,
$\cstar(H_t)\ge c+1$. Now $\cstar\le\cq\le\chi$ and $\chi(H_t)\le c+1$ give
the equalities.
\end{proof}

\section{The product coloring}\label{sec:product}

Let $t\ge1$ be an integer, and let $p=2t$ and $c=3t+2$. Let $F$ be a graph
with vertex set $[p]$ that has no closed walk of length three or five, and let
$G=F[K_t]$ and $H=H_t$. We use the notation of Section~\ref{sec:gadget}. For
$i\in[p]$ the set $V(F)$ is the disjoint union of the sets
\[
 E_i=\{v:\dist_F(i,v)\in\{0,2\}\},\qquad
 O_i=\{v:\dist_F(i,v)=1\},\qquad
 T_i=V(F)\setminus(E_i\cup O_i),
\]
where $\dist_F(i,v)=\infty$ if $i$ and $v$ lie in different components.
Thus $O_i$ is the set of neighbors of $i$. The sets $O_i$ and $T_i$ may be
empty.

\begin{lemma}\label{lem:regions}
Let $i\in[p]$.
\begin{enumerate}[(a)]
\item $E_i$ and $O_i$ are independent sets of $F$.
\item Every neighbor of a vertex in $\{i\}\cup O_i$ lies in $E_i\cup O_i$.
\item There is no edge between $O_i$ and $T_i$.
\end{enumerate}
\end{lemma}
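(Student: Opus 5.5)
The plan is to work directly from the distance function $d(v)=\dist_F(i,v)$ and the elementary fact that adjacent vertices $u\sim v$ satisfy $|d(u)-d(v)|\le1$, with both distances infinite if either is. The absence of closed walks of length three or five (equivalently, of cycles of length three or five, Remark~\ref{rem:hypotheses}(i)) will be used to rule out edges between vertices at the same small distance from $i$. For this, whenever two adjacent vertices have equal small distances, I will splice shortest paths from $i$ to each of them with the edge between them, producing a closed walk of length $2d+1$.

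For (a), I first treat $O_i$. If $u,v\in O_i$ are adjacent, then $i,u,v,i$ is a closed walk of length three, which is excluded. For $E_i$ I split by distances. Vertex $i$ has distance $0$, and its neighbours lie in $O_i$, so $i$ is adjacent to no vertex of $E_i$. If $u,v$ both have distance $2$ and $u\sim v$, choose paths $i,x,u$ and $i,y,v$. Then $i,x,u,v,y,i$ is a closed walk of length five, which is excluded. No other case occurs, since $E_i$ has distances $0$ and $2$ only, and distance $0$ means the vertex is $i$.

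For (b), the neighbours of $i$ form $O_i$ by definition. Now let $u\in O_i$ and let $w\sim u$. Then $d(w)\le d(u)+1=2$. If $d(w)=0$ then $w\in E_i$, if $d(w)=2$ then $w\in E_i$, and $d(w)=1$ is impossible by (a). So in fact every such $w$ lies in $E_i$, and in particular in $E_i\cup O_i$.

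For (c), let $u\in O_i$ and suppose $w\sim u$ with $w\in T_i$. Then $d(w)\le2$, so $w\in E_i\cup O_i$, a contradiction; this is really the same observation as in (b). The only step needing care is the length-five walk in (a). Since walks rather than cycles are excluded, repeated vertices do not matter, so I expect no genuine obstacle in this lemma.
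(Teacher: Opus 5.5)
Your proof is correct and follows essentially the same route as the paper: you rule out edges inside $O_i$ and $E_i$ with the closed walks $i,u,v,i$ and $i,x,u,v,y,i$, get (b) from the fact that a neighbour of a vertex in $O_i$ has distance at most $2$ from $i$, and deduce (c) from (b). Your extra observation in (b) that such neighbours actually lie in $E_i$ is a harmless sharpening that uses (a); the paper does not need it.
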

\begin{proof}
(a) An edge $uv$ with $u,v\in O_i$ gives the closed walk $i,u,v,i$ of length
three. Let $u,v$ have distance two from $i$, with paths $i,w,u$ and $i,w',v$.
An edge $uv$ gives the closed walk $i,w,u,v,w',i$ of length five. The
neighbors of $i$ lie in $O_i$, so $i$ has no neighbor in $E_i$.
(b) A neighbor of $i$ lies in $O_i$. A neighbor of a vertex in $O_i$ has
distance $0$, $1$ or $2$ from $i$ and hence lies in $E_i\cup O_i$.
(c) follows from (b).
\end{proof}

We define functions $\psi_y\colon V(G)\to[c]$ for $y\in V(H)$. Let
$(v,j)\in V(G)$, so $v\in[p]$ and $j\in[t]$. Put
\begin{align*}
 \psi_{e_r}(v,j)&=r,&\psi_f(v,j)&=v,&
 \psi_{h_{i,b}}(v,j)&=
 \begin{cases}b,&v\in\{i\}\cup O_i,\\i,&\text{otherwise}.\end{cases}
\end{align*}
For $y\in C_i$ let $\lambda_y=o$ if $y=\ell_{i,o}$ and $\lambda_y=b$ if
$y=z_{i,b}$, and put
\begin{equation}\label{eq:evaluation}
 \psi_y(v,j)=
 \begin{cases}
  \rho_i(j),&v\in E_i,\\
  \rho_i(t+j),&v\in O_i,\\
  \lambda_y,&v\in T_i.
 \end{cases}
\end{equation}
Thus a vertex $y\in C_i$ uses the labels in $\cE_i$ on $E_i$, the labels in
$\cO_i$ on $O_i$, and the single label $\lambda_y\in\cO_i\cup\Gamma$ on
$T_i$. The map $y\mapsto\lambda_y$ is injective on $C_i$. Moreover
$\lambda_y\in L(y)$: for $y=\ell_{i,o}$ we have
$o\in\cO_i\subseteq\Lambda_i=L(y)$, and for $y=z_{i,b}$ we have
$b\in\Lambda_i\cup\{b\}=L(y)$. Also $\cE_i\cup\cO_i=\Lambda_i\subseteq L(y)$
for every $y\in C_i$, and the values of $\psi_{e_r}$, $\psi_f$ and
$\psi_{h_{i,b}}$ lie in $\{r\}$, $[p]$ and $\{i,b\}$. Hence in all cases
\begin{equation}\label{eq:in-list}
 \psi_y(x)\in L(y)\qquad(y\in V(H),\ x\in V(G)).
\end{equation}

\begin{proposition}\label{prop:product}
The map $(x,y)\mapsto\psi_y(x)$ is an ordinary $c$-coloring of $G\times H$.
In particular $\chi(G\times H)\le c$.
\end{proposition}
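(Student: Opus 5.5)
The plan is a direct case check over the edge types \ref{E:anchors}--\ref{E:hz} of $H$. We must show $\psi_y(x)\ne\psi_{y'}(x')$ whenever $x=(v,j)\sim x'=(v',j')$ in $G$ and $y\sim y'$ in $H$. Throughout we use that $x\sim x'$ in $G=F[K_t]$ means either $v\sim v'$ in $F$, or $v=v'$ and $j\ne j'$. In particular, whenever $v$ and $v'$ lie in different blocks of the partition $E_i,O_i,T_i$ of $V(F)$, we must have $v\sim v'$.

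\emph{Edges \ref{E:anchors} and \ref{E:lists}.} For two anchors $e_r,e_{r'}$ with $r\ne r'$, the maps $\psi_{e_r}$ and $\psi_{e_{r'}}$ are the distinct constants $r$ and $r'$. For an edge $ye_r$ with $r\notin L(y)$, \eqref{eq:in-list} gives $\psi_y(x)\in L(y)\not\ni r=\psi_{e_r}(x')$.

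\emph{Edge \ref{E:fh}.} We have $\psi_f(v,j)=v\in[p]$ and $\psi_{h_{i,b}}(v',j')\in\{i,b\}$ with $b\in\Gamma$ disjoint from $[p]$. So a conflict forces $v=i$ and $\psi_{h_{i,b}}(v',j')=i$, that is, $v'\notin\{i\}\cup O_i$. But $v'=i$ or $v'$ is a neighbor of $i$, so $v'\in\{i\}\cup O_i$, a contradiction.

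\emph{Edge \ref{E:hz}.} The values of $\psi_{h_{i,b}}$ lie in $\{i,b\}$, and those of $\psi_{z_{i,b}}$ lie in $\Lambda_i\cup\{b\}$, which does not contain $i$. So a conflict requires the common value $b$. This means $v\in\{i\}\cup O_i$, and, since $b\notin\Lambda_i$, also $v'\in T_i$. Now $v'=v$ is impossible because the blocks are disjoint, and $v'\sim v$ is excluded by Lemma~\ref{lem:regions}(b).

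\emph{Edge \ref{E:clique}.} Let $y\ne y'$ lie in $C_i$, and split by the blocks containing $v$ and $v'$.
\begin{itemize}
\item Both in $E_i$, or both in $O_i$: these sets are independent by Lemma~\ref{lem:regions}(a), so $v=v'$ and $j\ne j'$. The values $\rho_i(j),\rho_i(j')$ (respectively $\rho_i(t+j),\rho_i(t+j')$) differ because $\rho_i$ is injective.
\item One in $E_i$ and one in $O_i$: the values lie in $\cE_i$ and $\cO_i$, which are disjoint.
\item Both in $T_i$: the values are $\lambda_y\ne\lambda_{y'}$, by injectivity of $y\mapsto\lambda_y$ on $C_i$.
\item One in $T_i$ and one in $E_i$: $\lambda\in\cO_i\cup\Gamma$ is disjoint from $\cE_i$.
\item One in $T_i$ and one in $O_i$: this cannot occur, because different blocks force $v\sim v'$, and Lemma~\ref{lem:regions}(c) excludes edges between $O_i$ and $T_i$.
\end{itemize}
These cases are exhaustive.

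This last case analysis for \ref{E:clique} is where the hypothesis of no closed walks of length three or five enters, through Lemma~\ref{lem:regions}; it is the step needing the most care. Even so, every case reduces to the disjointness of the label sets $\cE_i$, $\cO_i$, $\Gamma$ or to the injectivity of $\rho_i$ and of $y\mapsto\lambda_y$. The final bound $\chi(G\times H)\le c$ is then immediate.
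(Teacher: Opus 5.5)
Your proposal is correct and follows the paper's proof essentially step for step. It uses the same case split over the edge types \ref{E:anchors}--\ref{E:hz}, \eqref{eq:in-list} for anchor edges, Lemma~\ref{lem:regions}(b) for \ref{E:hz}, and the same six-way block analysis with Lemma~\ref{lem:regions}(a),(c) for \ref{E:clique}. The only point you leave implicit, which the paper states, is that checking one orientation $(y,y')$ of each edge of $H$ suffices because $x\sim x'$ is symmetric.
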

\begin{proof}
We have to show that
\begin{equation}\label{eq:cross-condition}
 \psi_y(x)\ne\psi_{y'}(x')\qquad
 \text{for all }x\sim x'\text{ in }G\text{ and all }y\sim y'\text{ in }H.
\end{equation}
The relation $x\sim x'$ is symmetric. Hence it suffices to verify
\eqref{eq:cross-condition} for one ordering $(y,y')$ of each edge of $H$ and
all ordered pairs $(x,x')$ with $x\sim x'$. Write $x=(v,j)$ and
$x'=(v',j')$. Then $v\sim v'$, or $v=v'$ and $j\ne j'$.

\emph{Edges \ref{E:anchors} and \ref{E:lists}.} Let $y'=e_r$ and let $y$ be
adjacent to $e_r$. Then $r\notin L(y)$ by \eqref{eq:list-rule}. By
\eqref{eq:in-list}, $\psi_y(x)\in L(y)$, whereas $\psi_{e_r}(x')=r$.

\emph{Edges \ref{E:fh}.} Let $y=f$ and $y'=h_{i,b}$. The values of $\psi_f$
lie in $[p]$ and those of $\psi_{h_{i,b}}$ in $\{i,b\}$, where
$b\notin[p]$. So equality in \eqref{eq:cross-condition} requires
$\psi_f(x)=i$, that is, $v=i$. Then $v'\in\{i\}\cup O_i$, and
$\psi_{h_{i,b}}(x')=b\ne i$.

\emph{Edges \ref{E:hz}.} Let $y=h_{i,b}$ and $y'=z_{i,b}$. The values of
$\psi_{z_{i,b}}$ lie in $\Lambda_i\cup\{b\}$, which does not contain $i$.
So equality requires $\psi_{h_{i,b}}(x)=b$, that is, $v\in\{i\}\cup O_i$.
If $v'=v$, then $v'\in\{i\}\cup O_i\subseteq E_i\cup O_i$; if $v'\sim v$,
then $v'\in E_i\cup O_i$ by Lemma~\ref{lem:regions}(b). In both cases
$\psi_{z_{i,b}}(x')\in\cE_i\cup\cO_i=\Lambda_i$, which does not contain $b$.

\emph{Edges \ref{E:clique}.} Let $y\ne y'$ in $C_i$. The following cases are
exhaustive up to exchanging $(x,y)$ and $(x',y')$.
\begin{center}
\begin{tabular}{@{}lp{.72\linewidth}@{}}
\toprule
Sets containing $v,v'$ & Reason for $\psi_y(x)\ne\psi_{y'}(x')$\\
\midrule
$E_i,E_i$ & $E_i$ is independent, so $v=v'$ and $j\ne j'$; then
$\rho_i(j)\ne\rho_i(j')$ because $\rho_i$ is injective.\\
$O_i,O_i$ & $O_i$ is independent, so $v=v'$ and $j\ne j'$; then
$\rho_i(t+j)\ne\rho_i(t+j')$.\\
$E_i,O_i$ & $\cE_i$ and $\cO_i$ are disjoint.\\
$E_i,T_i$ & $\cE_i$ and $\cO_i\cup\Gamma$ are disjoint.\\
$O_i,T_i$ & This case does not occur: $v\ne v'$, so $v\sim v'$, which
contradicts Lemma~\ref{lem:regions}(c).\\
$T_i,T_i$ & $\lambda_y\ne\lambda_{y'}$.\\
\bottomrule
\end{tabular}
\end{center}
This covers all edges of $H$.
\end{proof}

\section{Proofs of the theorems}\label{sec:proofs}

\begin{proof}[Proof of Theorem~\ref{thm:general}]
Part (a) is Proposition~\ref{prop:product}. For part (b), a $c$-coloring of
$G=F[K_t]$ in a nonzero unital $C^*$-algebra would give
$t\su(M)\le c\tr M$ by Corollary~\ref{cor:blowup}, contradicting
\eqref{eq:theta-hypothesis}. Part (c) is Proposition~\ref{prop:H}.

Let $\pi$ be a graph parameter with $\cstar\le\pi\le\chi$. Then
$\pi(G\times H)\le\chi(G\times H)\le c$ by (a). By (b) and (c), and because a
$k$-coloring with $k\le c$ extends to a $c$-coloring, we have
$\cstar(G)\ge c+1$ and $\cstar(H)\ge c+1$. Hence $\pi(G)\ge c+1$ and
$c+1\le\pi(H)\le\chi(H)\le c+1$.
\end{proof}

\begin{proof}[Proof of Theorem~\ref{thm:main}]
By Lemma~\ref{lem:certificate}, the graph $F$ of Section~\ref{sec:base} and
the matrix $M=A+10I$ satisfy the hypotheses of Theorem~\ref{thm:general}
with $t=512$, $p=1024$ and $c=1538$: indeed
$t\su(M)=16384p>15380p=c\tr M$. Hence Theorem~\ref{thm:general} gives (a),
the first assertion of (b), and \eqref{eq:main}. The last two sentences of
the theorem follow from \eqref{eq:main} and \eqref{eq:hierarchy}. The orders
of $G$ and $H$ are computed after \eqref{eq:fibers} and in
\eqref{eq:Horder}. The bound $\cstar(G)\ge1639$ is
Corollary~\ref{cor:Gbound}, and $\cstar(H)=\chi(H)=1539$ is
Proposition~\ref{prop:H}.
\end{proof}

\section{Concluding remarks}\label{sec:conclusion}

The conjecture \eqref{eq:conjecture} is equivalent to the statement that,
for every integer $c\ge1$ and all graphs $G,H$, the inequality
$\cq(G\times H)\le c$ implies $\min\{\cq(G),\cq(H)\}\le c$.
Theorem~\ref{thm:main} and Appendix~\ref{sec:smaller} show that this
implication fails for $c=1538$ and for $c=1184$. It holds for $c\le2$.
Indeed, $\cq(X)\le1$ if and only if $X$ has no edge, and $G\times H$ has an
edge if and only if both $G$ and $H$ have an edge. Moreover, $\cq(X)\le2$ if
and only if $\chi(X)\le2$ \cite[Proposition~3]{CMNSW}, and $G\times H$ is
bipartite if and only if $G$ or $H$ is bipartite. For the nontrivial
implication, recall that a graph is bipartite if and only if it has no closed
walk of odd length (Remark~\ref{rem:hypotheses}(i)). If $G$ and $H$ have
closed walks $x_0,\ldots,x_m=x_0$ and $y_0,\ldots,y_n=y_0$ of odd lengths $m$
and $n$, then the vertices $(x_{r\bmod m},y_{r\bmod n})$, $0\le r\le mn$, form
a closed walk of odd length $mn$ in $G\times H$. For Hedetniemi's
conjecture the case $c=3$ is a theorem of El-Zahar and Sauer~\cite{EZS}.

\begin{question}\label{q:three}
Does $\cq(G\times H)\le3$ imply $\min\{\cq(G),\cq(H)\}\le3$ for all graphs
$G,H$? What is the least $c$ for which there are graphs $G,H$ with
$\cq(G\times H)\le c<\min\{\cq(G),\cq(H)\}$?
\end{question}

\begin{question}\label{q:order}
What is the least $t$ for which a graph $F$ as in
Theorem~\ref{thm:general} exists? Equivalently, what is the least even $p$
for which there is a graph of order $p$ and odd girth at least $7$ with
$\thbar>3+4/p$?
\end{question}

The two formulations of Question~\ref{q:order} are equivalent by
Remark~\ref{rem:hypotheses}(i) and~(ii), with $p=2t$.
Appendix~\ref{sec:smaller} shows that the least such $p$ is at most $788$.
Question~\ref{q:order} is the analogue for $\thbar$ of
\cite[Question~6]{Zhu}.

\section*{Formal verification}

All theorems, corollaries, lemmas and propositions of this paper have been verified in Lean 4.19.0 with Mathlib, in the projective
formulation~\eqref{eq:relations} and for the parameters $\chi$, $\cq$ and
$\cstar$. Theorem~\ref{thm:general} is not stated as a single Lean theorem:
Lemma~\ref{lem:psd} and Propositions~\ref{prop:H} and~\ref{prop:product} are
verified in general form, and they are combined in Lean only for $t=512$ and
$t=394$. The equivalence of \eqref{eq:relations} with the coloring game and
the relations \eqref{eq:hierarchy} between the models are taken from the
literature and are not formalized. The Lean sources, a table that relates
them to the statements of this paper, the certificate data and exact Python
checkers are available in~\cite{zeiss-lean}.

\section*{A.I. disclosure}
\addcontentsline{toc}{section}{A.I. disclosure}
Generative AI was used extensively in the development
of this paper, including in exploring and refining proof ideas and in drafting and revising the text. In
particular, the proof idea underlying the main theorems and the construction of the counterexample is the result of prompting a generative AI system,
which proposed the core argument developed in this paper. The author subsequently verified, simplified,
and organized the argument, and edited its exposition. The author takes full responsibility for the correctness and
content of the paper.I am sharing these findings on arXiv, with a transparent account of the substantial contribution made by LLMs, to invite independent scrutiny and support further research.

\section*{Acknowledgments}
\addcontentsline{toc}{section}{Acknowledgments}
The author thanks Dorian Rudolph and Cormac Stopes for discussions on quantum graph coloring games,
and the Lean and Mathlib communities for the proof
assistant and mathematical library used in this work.
JZ acknowledges support from the European Research Council
(ERC Grant Agreement No.~948139) and the Excellence Cluster --
Matter and Light for Quantum Computing (ML4Q-2).

\appendix
\addtocontents{toc}{\protect\let\protect\numberline\protect\appendixnumberline}
\section[Appendix]{Smaller counterexamples}\label{sec:smaller}

The first example in this appendix is the instance of
Theorem~\ref{thm:general} for a base graph $F_0$ of order $788$. The second
example is obtained from the first by deleting $155$ vertices of the first
factor. The certificate for the hypothesis \eqref{eq:theta-hypothesis} and
the data for the second example are given by exact integers in the
directory \texttt{data/} of the repository~\cite{zeiss-lean}.

\subsection{The graphs}

Let $F$ be the graph of Section~\ref{sec:base}. Put
\begin{align*}
 \mathcal I&=\{0,2,4,8,23,24,30,34,40,42,46,50,53,61\},\\
 V_0&=\{v\in[1024]:\lfloor v/16\rfloor\notin\mathcal I\}
       \setminus\{112,\ldots,123\},
\end{align*}
and let $F_0$ be the subgraph of $F$ induced by $V_0$. It has $788$
vertices. To apply Sections~\ref{sec:gadget} and~\ref{sec:product} we
identify the vertex set $V_0$ of $F_0$ with $[788]$ through the increasing
bijection $[788]\to V_0$, $v\mapsto\nu_v$. As in the data files, we call
$\nu_v\in V_0$ the \emph{coordinate} of the vertex $v\in[788]$. Put
\[
 t_0=394,\qquad p_0=788=2t_0,\qquad c_0=3t_0+2=1184,\qquad
 H_0=H_{t_0},\qquad G_0=F_0[K_{t_0}].
\]
Then $|V(G_0)|=310472$, and $|V(H_0)|=789\cdot1184+1=934177$ by
\eqref{eq:Horder}. Let $v^*$ be the vertex of $F_0$ with $\nu_{v^*}=996$,
and let $G'_0$ be the subgraph of $G_0$ induced by all vertices except the
$155$ vertices $(v^*,j)$ with $239\le j<394$. Then $|V(G'_0)|=310317$.

\begin{corollary}\label{cor:smaller}
Let $X=G_0$ or $X=G'_0$. Then $\chi(X\times H_0)\le1184$, neither $X$ nor
$H_0$ has a $1184$-coloring in any nonzero unital $C^*$-algebra, and
$\chi(H_0)=1185$. Consequently, every graph parameter $\pi$ with
$\cstar\le\pi\le\chi$, in particular each of $\cq$, $\chi_{qs}$,
$\chi_{qa}$, $\chi_{qc}$ and $\cstar$, satisfies
\[
 \pi(X\times H_0)\le1184<1185=\min\{\pi(X),\pi(H_0)\}.
\]
\end{corollary}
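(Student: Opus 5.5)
For $X=G_0$ the plan is to apply Theorem~\ref{thm:general} with $t=t_0=394$ and $F=F_0$ under the identification $V_0\cong[788]$. Two hypotheses need checking. First, $F_0$ has no closed walk of length three or five. This is inherited from $F$: $F_0$ is an induced subgraph, so every closed walk in $F_0$ is a closed walk in $F$, and Lemma~\ref{lem:certificate}(a) applies. Second, we need a real positive-semidefinite matrix $M_0$ supported on $F_0$ with $394\su(M_0)>1184\tr M_0$, that is, $\su(M_0)/\tr M_0>3+2/394$. The principal submatrix of $A+10I$ on $V_0$ is the natural first candidate. It is positive semidefinite and supported on $F_0$, but its ratio is $1+\bar d/10$, where $\bar d$ is the average degree of $F_0$. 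This falls below $16/5$ because vertices were deleted, and it may well fall below the threshold.

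So I would instead take $M_0$ to be the exact integer certificate stored in \texttt{data/}. Verifying it is a finite exact computation. Support on $F_0$ and the inequality on $\su$ and $\tr$ are integer arithmetic. Positive semidefiniteness would be certified by an integer factorization such as $M_0=B^{\mathsf T}DB$ with $D$ a nonnegative diagonal matrix, or by a sum-of-squares identity like \eqref{eq:regular-sos}. Given these, Theorem~\ref{thm:general} yields $\chi(G_0\times H_0)\le1184$, that $G_0$ and $H_0$ have no $1184$-coloring in any nonzero unital $C^*$-algebra, and that $\chi(H_0)\le1185$. Proposition~\ref{prop:H} then gives $\chi(H_0)=1185$.

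For $X=G'_0$, the upper bound is immediate: $G'_0\times H_0$ is an induced subgraph of $G_0\times H_0$, so restricting the coloring of Proposition~\ref{prop:product} gives $\chi(G'_0\times H_0)\le1184$. The facts about $H_0$ do not change. For the lower bound on $G'_0$, Corollary~\ref{cor:blowup} does not apply directly, because $G'_0$ is not of the form $F[K_t]$: the vertex $v^*$ has only $239$ copies. I would therefore apply Lemma~\ref{lem:psd} directly to $G'_0$ with a matrix $M'$ built from $M_0$ with nonuniform clique weights. Take $M'=W^{\mathsf T}M_0W$, where $W_{u,(v,j)}=w_v\delta_{u,v}$ and the weights are $w_v=1$ for $v\ne v^*$ and some $w_{v^*}$ possibly different from $1$. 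More generally, I would use whatever integer data the repository provides for $G'_0$. Then $M'$ is positive semidefinite and supported on $G'_0$, by the argument in the proof of Corollary~\ref{cor:blowup}, and $\su(M')$ and $\tr M'$ are explicit integers. One must check $\su(M')>1184\tr M'$ by exact arithmetic.

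The main obstacle is this last certificate. Deleting $155$ copies of $v^*$ lowers the uniform ratio, so the margin $\su(M_0)/\tr M_0-1184/394$ has to be large enough to absorb the deletion. Otherwise the weights or the entries of $M'$ have to be reoptimized. I would first try to compute the margin for $M_0\otimes J$ restricted to $G'_0$. If it fails, I would solve for a reweighted $M'$ and certify it exactly, for instance by an $LDL^{\mathsf T}$ factorization over $\mathbb{Q}$. The final chain of inequalities then follows exactly as in the proof of Theorem~\ref{thm:general}, using \eqref{eq:hierarchy}.
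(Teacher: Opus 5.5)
Your proposal follows the paper's proof. For $G_0$ the paper likewise inherits the absence of closed walks of length three or five from $F$ and applies Theorem~\ref{thm:general} with the exact integer certificate $M_0$ of Lemma~\ref{lem:small-certificate}, and for $G'_0$ it uses the subgraph argument for the upper bound and Lemma~\ref{lem:psd} applied to $W^{\mathsf T}M_0W$ with nonuniform fiber weights. The differences are in the certificate data, not the argument: positivity of $M_0$ is certified by four $197\times197$ blocks under the $\F_2^2$ translation symmetry, each an integer Gram matrix plus a diagonally dominant remainder, rather than by an $LDL^{\mathsf T}$ factorization; and the weights are $w_v=74$ except at the twenty vertices of Table~\ref{tab:weights} (with $w_{v^*}=60$), not just at $v^*$, which your fallback of using the repository data covers.
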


Since $G'_0$ is an induced subgraph of $G_0$, a $k$-coloring of $G_0$ in a
unital $C^*$-algebra restricts to a $k$-coloring of $G'_0$ in the same
algebra, and $G'_0\times H_0$ is an induced subgraph of $G_0\times H_0$.
Hence the nonexistence of a $1184$-coloring of $G_0$ in a nonzero unital
$C^*$-algebra follows from that of $G'_0$, and
$\chi(G'_0\times H_0)\le1184$ follows from $\chi(G_0\times H_0)\le1184$. We
prove the statement for $G_0$ separately, because its certificate is
simpler.

\subsection{The certificate}

\begin{lemma}\label{lem:small-certificate}
There is a real positive-semidefinite matrix $M_0$ with integer entries that
is supported on $F_0$ and satisfies
\begin{align}
 \tr M_0&=400025192,&\su(M_0)&=1202501792,\label{eq:totals}\\
 394\su(M_0)-1184\tr M_0&=155878720>0.&&\label{eq:small-gap}
\end{align}
\end{lemma}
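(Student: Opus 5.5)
The lemma is an existence statement backed by an explicit integer matrix stored in \texttt{data/}. The plan is therefore to exhibit $M_0$ together with an exact certificate of positive semidefiniteness, and then check the remaining assertions by finite integer arithmetic.

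\emph{Support.} This is mechanical. Every nonzero off-diagonal entry $(M_0)_{uv}$ must satisfy $\nu_u\oplus\nu_v\in S$. Here $\nu$ is the increasing bijection $[788]\to V_0$, so adjacency in $F_0$ is read off through the coordinates.

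\emph{Totals.} I would check \eqref{eq:totals} by summing the diagonal and all entries exactly. Then \eqref{eq:small-gap} is the arithmetic $394\cdot1202501792-1184\cdot400025192=473785705  \ldots$; this product is to be carried out exactly, and the difference should be $155878720$.

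\emph{Positive semidefiniteness.} This is the main obstacle, since the Hoffman-polynomial trick of Lemma~\ref{lem:certificate} is no longer available. The induced subgraph $F_0$ is not regular, and there is no reason for $A_{F_0}+10I$ to give a ratio above $3+2/394$. In fact, restricting $A+10I$ to $V_0$ keeps $M\succeq0$ (a principal submatrix), but the ratio is only about $(10\cdot788+\text{edges})/(10\cdot788)$. The edge count then has to give $\su/\tr>1184/394\approx3.005$, which will fail after deleting vertices. So $M_0$ must be a genuinely optimized matrix, presumably a rounded near-optimal solution of the theta SDP for $F_0$.

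For the certificate I would use a factorization. One option is $M_0=B^{\mathsf T}DB$ with $B$ unit lower triangular with rational entries and $D$ diagonal with nonnegative rational entries, obtained by exact $LDL^{\mathsf T}$ elimination in rational arithmetic. The other is $M_0=\sum_r w_rw_r^{\mathsf T}$ with integer vectors $w_r$. The latter is convenient: if the data provide integer vectors $w_r$ with $M_0=W^{\mathsf T}W$ exactly, then $M_0\succeq0$ follows immediately, as in Remark~\ref{rem:sos}.

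A cheaper route is also possible. One could first find a rational PSD matrix $N$ supported on $F_0$ with positive margin, and then take $M_0$ to be a scaled, rounded version plus a diagonal correction $\delta I$. This $\delta I$ absorbs the rounding error, with the error bounded by Gershgorin or by a Frobenius-norm estimate. The diagonal shift increases $\tr M_0$, so the margin $155878720$ has to absorb $1184\cdot788\delta$. The given numbers should make this consistent: the margin is about $0.13$ times $\su(M_0)$, which is comfortable.

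The verification then reduces to an exact integer identity $M_0=W^{\mathsf T}W$, or to an exact $LDL^{\mathsf T}$ decomposition with $D\ge0$, which a checker can confirm. I expect producing such a certificate, rather than checking it, to be the real work.
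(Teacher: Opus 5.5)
Your plan has the same overall shape as the paper's proof. $M_0$ is an explicit integer matrix taken from the data, its support and the totals \eqref{eq:totals} are exact finite checks, and positivity is certified exactly. The difference is in the certificate.

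\textbf{The paper's certificate.} The paper first uses the translation symmetry. $V_0$ is a union of $197$ cosets of $\{0,1,2,3\}$, and $M_0$ is assembled from orbit weights so that $(M_0)_{(i,x),(j,y)}=\cK_{x\oplus y}(i,j)$. A Fourier transform over $\F_2^2$ then reduces $M_0\succeq0$ to positivity of four $197\times197$ blocks $\widehat{\cK}_a$. Each block is certified by an integer lower-triangular $L_a$, a rounded Cholesky factor, such that $q^2\widehat{\cK}_a-L_aL_a^{\mathsf T}$ is diagonally dominant.

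\textbf{Your rounding route.} The paper's certificate is the rigorous form of your ``cheaper route''. The rounding error is absorbed in the residual of a factorization, where positivity of $L_aL_a^{\mathsf T}$ comes for free. $M_0$ itself, with its prescribed support, is never perturbed. Your version rounds a matrix $N$ and shifts by $\delta I$, and as stated it is circular. If you can certify that a rational $N$ supported on $F_0$ is positive semidefinite, then a suitable multiple of $N$ is already an integer certificate, and the rounding adds nothing.

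\textbf{Your exact $LDL^{\mathsf T}$ fallback.} This is a valid certificate and even tolerates a singular $M_0$. On the full $788\times788$ matrix, however, it needs rational arithmetic with very large denominators because of fill-in. The paper's check needs only integer products and row sums on small blocks, which is what makes the Lean verification practical. The price is that each block must in practice be positive definite, with enough slack to absorb the rounding of $L_a$.

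\textbf{Two numerical remarks.} First, your displayed equation is garbled. In fact $394\cdot1202501792=473785706048$ and $1184\cdot400025192=473629827328$, and their difference is $155878720$.

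Second, the relevant slack is not the margin relative to $\su(M_0)$. It is $\su(M_0)/\tr M_0\approx3.00606$ against $1184/394\approx3.00508$, a relative gap of only about $3\cdot10^{-4}$. A shift $\delta I$ must satisfy $1184\cdot788\,\delta\le155878720$, that is, $\delta\le167$, while the diagonal entries average about $5\cdot10^5$. That still suffices to absorb integer rounding, but the budget is much tighter than your figure of $0.13$ suggests.
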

\begin{proof}[Proof by exact certificate]
\emph{Definition of $M_0$.} The set $V_0$ is a union of $197$ cosets of the
subgroup $\{0,1,2,3\}$ of $\F_2^{10}$. We index the vertex with coordinate
$\nu$ by the pair $(i,x)$, where $i\in[197]$ is the position of
$\lfloor\nu/4\rfloor$ among the values $\lfloor\nu'/4\rfloor$,
$\nu'\in V_0$, in increasing order, and $x=(x_0,x_1)\in\F_2^2$ consists of the two low
bits of $\nu$, that is, $\nu\equiv x_0+2x_1\pmod 4$. In the data an element
$z=(z_0,z_1)\in\F_2^2$ is written as the integer $z_0+2z_1$. The file \texttt{matrix\_certificate.json} contains the list
\texttt{cosets} of the $197$ values $\lfloor\nu/4\rfloor$ in increasing
order, and the set $S$ as \texttt{steps}. The group $\{0,1,2,3\}$ acts on $F_0$ by
the translations $\nu\mapsto\nu\oplus g$. The field \texttt{specs} lists
its $197$ orbits on the vertices and its $1992$ orbits on the edges of
$F_0$, and \texttt{integer\_weights} lists the integer weights of these
$2189$ orbits; the fields \texttt{rank} and \texttt{basis} are not needed
here. Start with the zero
matrix. An orbit $(\texttt{diag},i,i,0)$ of weight $m$ adds $m$ to the entry
$((i,x),(i,x))$ for each $x\in\F_2^2$. An orbit $(\texttt{edge},i,j,z)$ with
$i\le j$ and weight $m$ adds $m$ to the entry $((i,x),(j,x\oplus z))$ for
each $x\in\F_2^2$ and, if $i\ne j$, also to the transposed entry. The result
is $M_0$. One checks entrywise that $M_0$ is symmetric and supported on
$F_0$, and that
\begin{equation}\label{eq:kernel}
 (M_0)_{(i,x),(j,y)}=\cK_{x\oplus y}(i,j)
\end{equation}
for four real symmetric $197\times197$ matrices $\cK_z$, $z\in\F_2^2$.

\emph{Reduction to four blocks.} For $a,z\in\F_2^2$ let
$a\cdot z=a_0z_0+a_1z_1\in\F_2$, and put
\begin{equation}\label{eq:blocks}
 \widehat{\cK}_a=\sum_{z\in\F_2^2}(-1)^{a\cdot z}\cK_z\qquad(a\in\F_2^2).
\end{equation}
For a real vector $\xi=(\xi_{i,x})$ define
$\widehat\xi^{(a)}_i=\frac12\sum_{x\in\F_2^2}(-1)^{a\cdot x}\xi_{i,x}$.
Since $\sum_{x\in\F_2^2}(-1)^{(a\oplus b)\cdot x}=4\delta_{a,b}$ for
$a,b\in\F_2^2$, and since $a\cdot x=x\cdot a$, we have
$\xi_{i,x}=\frac12\sum_a(-1)^{a\cdot x}\widehat\xi^{(a)}_i$. With
\eqref{eq:kernel} this gives
\begin{align*}
 \xi^{\mathsf T}M_0\xi
 &=\sum_{i,j}\sum_{x,y}\xi_{i,x}\,\cK_{x\oplus y}(i,j)\,\xi_{j,y}\\
 &=\frac14\sum_{a,b}\sum_{i,j}\widehat\xi^{(a)}_i\,\widehat\xi^{(b)}_j
   \sum_{x,y}(-1)^{a\cdot x+b\cdot y}\cK_{x\oplus y}(i,j).
\end{align*}
In the inner sum put $y=x\oplus z$, so that
$b\cdot y=b\cdot x+b\cdot z$. Then
\[
 \sum_{x,y}(-1)^{a\cdot x+b\cdot y}\cK_{x\oplus y}
 =\sum_z(-1)^{b\cdot z}\cK_z\sum_x(-1)^{(a\oplus b)\cdot x}
 =4\delta_{a,b}\widehat{\cK}_a ,
\]
and therefore
\[
 \xi^{\mathsf T}M_0\xi
 =\sum_{a\in\F_2^2}(\widehat\xi^{(a)})^{\mathsf T}\widehat{\cK}_a\,
   \widehat\xi^{(a)} .
\]
Hence $M_0\succeq0$ if the four matrices $\widehat{\cK}_a$ are positive
semidefinite.

\emph{Positivity of the blocks.} The file
\texttt{gram\_certificate.json.gz} contains the integer $q=10^6$ as
\texttt{scale} and, as \texttt{lower\_triangular\_integer\_factors}, a list
of four integer lower-triangular matrices $L_a$, where row $r$ of a matrix
is stored as the list of its entries in columns $0,\ldots,r$;
the matrix at position $a_0+2a_1\in\{0,1,2,3\}$ of the list belongs to
$a=(a_0,a_1)$. Put
\[
 \Delta_a=q^2\widehat{\cK}_a-L_aL_a^{\mathsf T}.
\]
An exact integer computation shows that, for all four blocks and all rows
$i$,
\begin{equation}\label{eq:dd}
 (\Delta_a)_{ii}\ge\sum_{j\ne i}|(\Delta_a)_{ij}|.
\end{equation}
A real symmetric matrix $\Delta$ with \eqref{eq:dd} is positive
semidefinite, because
$2|\Delta_{ij}\zeta_i\zeta_j|\le|\Delta_{ij}|(\zeta_i^2+\zeta_j^2)$ gives
\[
 \zeta^{\mathsf T}\Delta\zeta\ge
 \sum_i\Bigl(\Delta_{ii}-\sum_{j\ne i}|\Delta_{ij}|\Bigr)\zeta_i^2\ge0 .
\]
Consequently
$\widehat{\cK}_a=q^{-2}(L_aL_a^{\mathsf T}+\Delta_a)\succeq0$ and
$M_0\succeq0$.

\emph{Totals.} Exact summation gives \eqref{eq:totals}, and
\eqref{eq:small-gap} follows.

\emph{Formalization.} The Lean sources contain the entries of the matrices
$\cK_z$, $\widehat{\cK}_a$ and $L_a$ as integer data, and $M_0$ is defined
there by \eqref{eq:kernel}. For these data Lean proves that $M_0$ is
supported on $F_0$, the identities \eqref{eq:blocks}, the inequalities
\eqref{eq:dd}, that $M_0\succeq0$, and \eqref{eq:totals}. Lean does not read
the JSON files. That the data embedded in the Lean sources agree with the
data obtained from the two JSON files as described above is not a theorem
of Lean; it is checked by an exact comparison, the script
\texttt{numerics/compare\_lean\_data.py} of the
repository~\cite{zeiss-lean}.
Independently of Lean, the Python checker \texttt{data/verify.py} performs
all checks of this proof on the JSON files. Neither Lean nor the Python
checker uses floating-point arithmetic.
\end{proof}

\subsection{Proof of Corollary~\ref{cor:smaller}}

\begin{proof}[Proof for $X=G_0$]
The graph $F_0$ has no closed walk of length three or five, because such a
walk would be a walk in $F$, contradicting
Lemma~\ref{lem:certificate}(a). By Lemma~\ref{lem:small-certificate}, the
matrix $M_0$ satisfies \eqref{eq:theta-hypothesis} with $t=t_0$ and
$c=c_0$. Hence Theorem~\ref{thm:general} applies to $F_0$, with the above
identification of its vertex set with $[788]$, and the corollary for $G_0$
follows from
Theorem~\ref{thm:general} and Proposition~\ref{prop:H}.
\end{proof}

\begin{proof}[Proof for $X=G'_0$]
The graph $G'_0\times H_0$ is a subgraph of $G_0\times H_0$, so
$\chi(G'_0\times H_0)\le1184$. The statements about $H_0$ were proved
above. It remains to show that $G'_0$ has no $1184$-coloring in a nonzero
unital $C^*$-algebra.

For $v\in[788]$ let $s_v=239$ if $v=v^*$ and $s_v=394$ otherwise, so
that $V(G'_0)=\{(v,j):v\in[788],\ 0\le j<s_v\}$. Let $w_v=74$ for all
$v\in[788]$ except those whose coordinate $\nu_v$ is listed in
Table~\ref{tab:weights}, for which $w_v$ is the value given there. The file
\texttt{instance.json} contains these data: \texttt{fibers} gives the
numbers $s_v$, and \texttt{diagonal\_congruence} gives the numbers $w_v$; in
both fields the exceptions are indexed by the coordinate $\nu_v$. Define the
matrix $\widetilde M$ indexed by $V(G'_0)$ by
\[
 \widetilde M_{(u,j),(v,\ell)}=w_u(M_0)_{uv}w_v .
\]
Then $\widetilde M=W^{\mathsf T}M_0W$ for the real $788\times310317$ matrix
$W$ with entries $W_{u,(v,j)}=w_v\delta_{u,v}$, so $\widetilde M\succeq0$.
If two distinct vertices $(u,j),(v,\ell)$ of $G'_0$ are not adjacent, then
$u\ne v$ and $u\not\sim v$ in $F_0$, so $(M_0)_{uv}=0$. Thus $\widetilde M$
is supported on $G'_0$. Exact integer summation gives
\begin{align*}
 \su(\widetilde M)&=\sum_{u,v}s_us_vw_u(M_0)_{uv}w_v
   =1020438226912198088,\\
 \tr\widetilde M&=\sum_vs_vw_v^2(M_0)_{vv}=861856598314848,
\end{align*}
and $\su(\widetilde M)-1184\tr\widetilde M=14507418056>0$. By
Lemma~\ref{lem:psd}, $G'_0$ has no $1184$-coloring in a nonzero unital
$C^*$-algebra. The inequalities for $\pi$ follow as in the proof of
Theorem~\ref{thm:general}.
\end{proof}

\begin{table}[ht]
\centering
\begin{tabular}{@{}l*{10}{r}@{}}
\toprule
$\nu_v$&26&185&193&291&349&351&587&663&785&868\\
$w_v$&72&72&73&73&73&75&73&73&72&72\\
\midrule
$\nu_v$&932&964&992&994&996&998&1004&1006&1012&1019\\
$w_v$&72&72&69&75&60&75&73&75&72&72\\
\bottomrule
\end{tabular}
\caption{The twenty vertices $v$ of $F_0$, given by their coordinates
$\nu_v$, with $w_v\ne74$.}
\label{tab:weights}
\end{table}

\end{document}